\documentclass[12pt]{amsart}
\usepackage{amsfonts}
\usepackage{amsmath}
\usepackage{amssymb,latexsym}
\usepackage[mathcal]{eucal}
\usepackage{amscd}
\input xy
\xyoption{all}

\newcommand{\Z}{\mathbb{Z}}
\newcommand{\R}{\mathbb{R}}
\newcommand{\Ga}{\Gamma}
\newcommand{\Sig}{\Sigma}
\newcommand{\ThetaGrp}{\Theta}
\newcommand{\Del}{\Delta}
\newcommand{\del}{\delta}
\newcommand{\Id}{\text{Id}}

\newcommand{\Prol}{{\rm{Prol\,}}}
\newcommand{\comm}{{\rm{Comm\,}}}

\newcommand{\imp}{\Rightarrow}

\theoremstyle{plain}
\newtheorem{thm}{Theorem}[section]
\newtheorem{prop}[thm]{Proposition}
\newtheorem{lem}[thm]{Lemma}
\newtheorem{cor}[thm]{Corollary}

\theoremstyle{definition}
\newtheorem{defn}[thm]{Definition}
\newtheorem{exas}[thm]{Examples}
\newtheorem{rmk}[thm]{Remark}

\numberwithin{thm}{section}

\begin{document}

\title{On jointly transitive commuting minimal homeomorphisms}

\author{Eli Glasner}
\address{Department of Mathematics\\
Tel Aviv University\\
Tel Aviv\\
Israel}
\email{glasner@math.tau.ac.il}
\date{August 24, 2026}

\begin{abstract}
In analogy with ergodic theory, we introduce the notions of joint topological transitivity and joint minimality
for commuting minimal homeomorphisms on a compact metric space $X$.
We prove an analogue of the
Berend--Bergelson theorem for the first notion and investigate to what extent this theorem holds for the second.
On the way we show that 
if $T$ and $S$ are two minimal distal commuting homeomorphisms of a compact metric space $X$,
the system $(X, \Gamma)$, where $\Gamma$ is the group generated by $T$ and $S$, is distal. 
\end{abstract}

\keywords{minimal, joint topological transitivity, distal}

\subjclass[2020]{Primary  37B05, 37B20}

\maketitle

\section{Introduction}

In ergodic theory, a result by Berend and Bergelson \cite{BB-84} establishes that for commuting measure-preserving transformations $T$ and $S$, the sequence of iterates $(T^n, S^n)$ is jointly ergodic if and only if two conditions are met: the product transformation $T \times S$ is ergodic, and the  transformation $T^{-1}S$ is ergodic.
In this note, we investigate an analogue of this phenomenon in topological dynamics.

We first restrict our attention to the case of two commuting homeomorphisms $T$ and $S$,
each acting minimally on a compact metric space $X$.
The topological counterpart to joint ergodicity is \emph{joint topological transitivity} (or $\Delta$-transitivity), which requires the diagonal orbit  $\{(T \times S)^n(x,x) : n \in \Z\}$ to be dense in $X \times X$, for  a dense $G_\del$ set of points $x \in X$.

A central observation of this note is that the topological analogue of the Berend-Bergelson theorem holds: We will show that joint topological transitivity is equivalent to the combined topological transitivity of both the product system $(X \times X, T \times S)$ and the system $(X, T^{-1}S)$.

We then explore the much more rigid property of \emph{joint minimality}, identifying where the transitivity implications break down, and describing the necessary counterexamples.

\section{Definitions and Main Results}

Let $X$ be a compact (metric) space and $T$ and $S$ two commuting self-homeomorphisms of $X$ such that both actions $(X,T)$ and $(X,S)$ are minimal.

Let $\Ga =\langle T, S \rangle$ be the group generated by $T$ and $S$, and denote $R = T^{-1}S$.
On the product space $X \times X$, let $\Sig =\langle T \times T , S \times S, T \times S \rangle= \langle \Id \times R , T \times T , S \times S \rangle$ and let $\ThetaGrp = \langle T \times T, S \times S \rangle$.
Set $\Del = \{(x,x) : x \in X\}$.

\begin{defn}
Consider the following dynamical conditions for the system:
\begin{enumerate}
\item[(0)] The group $\Sig$ acts minimally on $X \times X$.
\item[(1)] The system $(X,R)$ is topologically transitive.
\item[(2)] $\overline{\cup\{ (T \times S)^n \Del : n \in \Z\}}= X \times X$.
\item[(3)] The system $(X \times X, T \times S)$ is topologically transitive.
\item[$(1^*)$] The system $(X,R)$ is minimal.
\item[$(2^*)$] For every $x \in X$, $ \overline{\{ (T \times S)^n (x,x) : n \in \Z\}}= X \times X$.
\item[$(2^{**})$] For every $x$ in a dense $G_\delta$ subset $X_0 \subset X$, we have $ \overline{\{ (T \times S)^n (x,x) : n \in \Z\}}= X \times X$.
\item[$(3^*)$] The system $(X \times X, T \times S)$ is minimal (i.e., $T$ and $S$ are disjoint).
 \end{enumerate}
\end{defn}

The following theorem summarizes the logical structure of these conditions, explicitly establishing the topological analogue of the Berend-Bergelson theorem in the middle row.

\begin{thm}\label{MainTheorem}
The following implications and equivalences hold:

\begin{equation*}
\xymatrix{
\text{\bf Minimality} & (3^*) \ar@{=>}[r] \ar@{=>}[d] \ar@/^1.5pc/@{.>}[rr]^{\text{?}} & (2^*) \ar@{=>}[d] & (1^*) \ar@{=>}[d] \\
\text{\bf Transitivity} & (3) \ar@{<=}[r] \ar@{=>}[d] & (2^{**}) \ar@{=>}[d] & (1) \ar@{<=>}[d] \\
\text{\bf Closures} & (0) \ar@{<=>}[r] & (2) \ar@{<=>}[ur] & (1)
}
\end{equation*}
Furthermore, the topological analogue of the Berend-Bergelson theorem is satisfied: 
$$ (2^{**}) \iff (3) + (1) $$
\end{thm}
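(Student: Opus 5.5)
The plan is to dispose of the trivial arrows first, then prove the chain $(0)\iff(2)\iff(1)$ by direct computation, and finally handle $(3)\Rightarrow(0)$, which I expect to be the real difficulty. The Berend--Bergelson equivalence then follows by combining these.

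\emph{Trivial arrows.}
\begin{itemize}
\item $(3^*)\Rightarrow(3)$, $(1^*)\Rightarrow(1)$ and $(2^*)\Rightarrow(2^{**})$ are immediate.
\item $(3^*)\Rightarrow(2^*)$: under $(3^*)$ every $T\times S$-orbit is dense, in particular that of $(x,x)$.
\item $(2^{**})\Rightarrow(3)$: any $x\in X_0$ gives a point $(x,x)$ with dense $T\times S$-orbit.
\item $(2^{**})\Rightarrow(2)$: $\Del$ contains such a point, so $\cup_n (T\times S)^n\Del$ is dense.
\end{itemize}

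\emph{The closure row.} The key identity is that $T\times S=(T\times T)(\Id\times R)$ and $\ThetaGrp\Del=\Del$. Hence
\[
\bigcup_n (T\times S)^n\Del=\bigcup_n(\Id\times R)^n\Del=\Sig\Del=\{(x,R^nx): x\in X,\ n\in\Z\}.
\]
\begin{itemize}
\item $(1)\iff(2)$: this set meets an open box $U\times V$ iff $U\cap R^{-n}V\neq\emptyset$ for some $n$, which is exactly topological transitivity of $R$.
\item $(0)\Rightarrow(2)$: the $\Sig$-orbit of a diagonal point lies in $\Sig\Del$, so by minimality $\Sig\Del$ is dense.
\item $(2)\Rightarrow(0)$: let $Y$ be any nonempty closed $\Sig$-invariant set. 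It suffices to show $Y$ meets $\Del$, since then $Y\supseteq\overline{\Sig\Del}=X\times X$.
\item Since $S\times S\in\Sig$ and $S$ is minimal, the second projection of $Y$ is all of $X$.
\item By $(1)$ the $R$-transitive points form a dense $G_\del$ set; pick one, $y'$, and some $(x',y')\in Y$.
\item Then $R^{-m_k}y'\to x'$ for a suitable sequence $m_k$, so $(\Id\times R^{-m_k})(x',y')\to(x',x')\in Y$.
\end{itemize}

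\emph{The remaining arrow $(3)\Rightarrow(0)$.} Given the equivalences above, it suffices to prove $(3)\Rightarrow(1)$. Suppose $R$ is not transitive. Then there are nonempty open sets $U,V$ with $R^nU\cap V=\emptyset$ for all $n$. Put $A=\overline{\cup_n R^nU}$, a proper closed $R$-invariant set with nonempty interior that is disjoint from $V$. Since everything commutes,
\[
(T\times S)(T^kA\times T^kA)=T^{k+1}A\times T^{k+1}RA=T^{k+1}A\times T^{k+1}A.
\]
Therefore $F=\overline{\cup_k T^kA\times T^kA}$ is a closed $T\times S$-invariant set containing the open set $U\times U$. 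Transitivity of $T\times S$ forces $F=X\times X$.

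The hope is to contradict this. I would shrink $U,V$ and use minimality of $T$ together with a Baire/uniform-continuity argument to find an open box $U'\times V'$ with $T^kA\times T^kA$ missing it for every $k$. That is the main obstacle: it is not yet clear why the translates $T^kA$ cannot pairwise cover all boxes. If this fails, I would instead attack $(3)\Rightarrow(2)$ directly via a transitive point of $T\times S$ and the $\Sig$-orbit-closure argument above.

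\emph{Berend--Bergelson equivalence.} The direction $(2^{**})\Rightarrow(3)+(1)$ follows from $(2^{**})\Rightarrow(3)$, $(2^{**})\Rightarrow(2)$ and $(2)\iff(1)$. For the converse, the sets
\[
O_{U,V}=\{x:\exists n,\ (T^nx,S^nx)\in U\times V\}
\]
over a countable base are open. It therefore suffices to show each $O_{U,V}$ is dense. This is where $(3)$ and $(1)$ must both be used. I would apply $(3)$ to find $n$ with $(T\times S)^n(W\times W)\cap(U\times V)\neq\emptyset$, and then use $(1)$, via the density of $\{(x,R^mx)\}$, to move that witnessing pair onto the diagonal. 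The $G_\del$ set $X_0=\bigcap O_{U,V}$ then witnesses $(2^{**})$.
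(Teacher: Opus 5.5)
Your closure row is correct and more elementary than the paper's. You get $(1)\iff(2)$ by testing boxes rather than via prolongations. Your proof of $(2)\imp(0)$ argues that a closed $\Sig$-invariant $Y$ has $\pi_2(Y)=X$, so it contains some $(x',y')$ with $y'$ an $R$-transitive point, hence $(x',x')$, hence $\Del$ by minimality of $T$. This replaces the paper's Ellis-semigroup argument that minimal idempotents of $E(X\times X,\ThetaGrp)$ remain minimal in $E(X\times X,\Sig)$.

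However, the step you call the real difficulty cannot be done, because the arrow $(3)\imp(0)$ is false. The paper's proof never establishes it; it proves only the trivial arrows, the lower cycle and the Berend--Bergelson equivalence. Take $S=T$ with $(X,T)$ minimal and weakly mixing on a space with more than one point. Then $(3)$ holds. But $R=\Id$ is not transitive, and $\Sig=\langle T\times T\rangle$ preserves the proper closed set $\Del$, so $(1)$ and $(0)$ both fail. In your attempted argument this gives $A=\overline U$ and $F=X\times X$, since $U\times U$ contains a transitive point of $T\times T$. So no contradiction is available, and your fallback via $(3)\imp(2)$ fails for the same reason. That arrow has to be discarded, and $(1)$ is a genuinely independent hypothesis in the Berend--Bergelson equivalence.

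The second gap is the converse $(3)+(1)\imp(2^{**})$, which you leave as a plan whose key step does not work. Suppose you use density of $\{(x,R^mx)\}$ to put a point $(x,R^mx)$ into $(W\times W)\cap(T\times S)^{-n}(U\times V)$. Since $(x,R^mx)=(T\times T)^{-m}(T\times S)^m(x,x)$, you only obtain $(T\times S)^{n+m}(x,x)\in(T\times T)^m(U\times V)$ with $m$ uncontrolled. That is not a diagonal orbit point in $U\times V$.

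The missing idea is to use minimality of $T$ to reduce to finitely many $T\times T$-translates, combined with a Baire argument, as the paper does:
\begin{itemize}
\item Let $\Omega=\bigcup_n(T\times S)^{-n}(U\times V)$. It is open, dense by $(3)$, and $(T\times S)$-invariant, and $x\in O_{U,V}$ iff $(x,x)\in\Omega$.
\item Let $\Del_W=\{(x,x):x\in W\}$. If $O_{U,V}\cap W=\emptyset$, then $\bigcup_m(T\times S)^m\Del_W$ lies in the closed nowhere dense set $(X\times X)\setminus\Omega$.
\item Choose $k_1,\dots,k_p$ with $X=\bigcup_iT^{k_i}W$. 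Since $T\times T$ commutes with $T\times S$, the set $\bigcup_m(T\times S)^m\Del$ is contained in $\bigcup_i(T\times T)^{k_i}\bigl((X\times X)\setminus\Omega\bigr)$.
\item The right-hand side is still closed and nowhere dense, while the left-hand side is dense by $(2)$. This is a contradiction.
\end{itemize}
So each $O_{U,V}$ is dense, and your $G_\del$ intersection then finishes the proof.
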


In Section 4 we deal with the distal case and show that if $T$ and $S$ are commuting distal minimal homeomorphisms, then the equivalences $(3) \iff (3^*)$ and $(1) \iff (1^*)$ hold unconditionally.

\begin{rmk}
A preliminary draft of this note had been sitting in my drawer for many years. I recently decided to revive it, and after considerable elaboration, brought it to its present form. After posting the manuscript to the arXiv, I was informed by Song Shao of the recent paper \cite{DKW-25}.
While this latter work proves the same main theorem as the one presented here, the methods of proof are very different. Furthermore, I believe that the study of joint minimality, the treatment of the distal case, and several of the counterexamples presented in this note are new.
\end{rmk}

\section{Proofs of the Implications}

\begin{defn}
Let $(X,T)$ be a (compact metric) cascade.
The {\em prolongation relation} on $X$ is the subset
$$
Prol_T(X) = \{(x,x') \in X \times X : \exists x_i \to x \ \& \ \exists \ {\text{a sequence}} \ n_i \ {\text{such that }} \  T^{n_i}x_i \to x'\}.
$$ 
See \cite{AG-88} for more details  on this relation.
\end{defn}

The following proposition is well known;
we present its proof anyhow.

\begin{prop}\label{prop-prol-trans}
Let $(X,T)$ be a dynamical system on a compact metric space.
The system $(X,T)$ is topologically transitive if and only if $\Prol_T(x) = X$ for every $x \in X$.
\end{prop}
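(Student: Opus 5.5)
We prove the two directions separately, using throughout the standard characterization that $(X,T)$ is topologically transitive iff for every pair of nonempty open sets $U,V\subset X$ there is $n\in\Z$ with $T^nU\cap V\neq\emptyset$. (For a compact metric space this is equivalent to the existence of a dense orbit, via the usual Baire category argument.) Here $\Prol_T(x)$ denotes the section $\{x' : (x,x')\in \Prol_T(X)\}$, and the sequence $n_i$ ranges over $\Z$.

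\emph{Transitivity implies full prolongation.} Fix $x,x'\in X$ and let $U_i$, $V_i$ be the open balls of radius $1/i$ around $x$ and $x'$ respectively. Transitivity gives integers $n_i$ with $T^{n_i}U_i\cap V_i\neq\emptyset$. Pick $x_i\in U_i$ with $T^{n_i}x_i\in V_i$. Then $x_i\to x$ and $T^{n_i}x_i\to x'$, so $(x,x')\in\Prol_T(X)$. Since $x'$ was arbitrary, $\Prol_T(x)=X$ for every $x$.

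\emph{Full prolongation implies transitivity.} Let $U,V$ be nonempty open sets, and pick $x\in U$ and $x'\in V$. Since $x'\in\Prol_T(x)$, there are $x_i\to x$ and integers $n_i$ with $T^{n_i}x_i\to x'$. For $i$ large we have $x_i\in U$ and $T^{n_i}x_i\in V$, hence $T^{n_i}U\cap V\neq\emptyset$. Thus $(X,T)$ is transitive in the open-set sense. This direction in fact only needs $\Prol_T(x)=X$ for a single point $x$ chosen in $U$, but since $U$ is arbitrary the hypothesis for all $x$ is what we use.

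The only point needing care, and the main obstacle, is reconciling the open-set definition of transitivity with the dense-orbit definition. If the dense-orbit version is intended, we pass from the open-set version to it by Baire category. Let $\{V_k\}$ be a countable base. Each set $\bigcup_n T^{-n}V_k$ is open, and it is dense by the open-set property. Their intersection is therefore a dense $G_\delta$ set, and it consists of points with dense orbit. Conversely, a dense orbit gives the open-set property: if $X$ has an isolated point, then the dense orbit passes through it and the claim is immediate; otherwise the dense orbit visits every nonempty open set infinitely often, which yields the required $n$.
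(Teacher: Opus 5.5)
Your argument is correct and is essentially the paper's proof: in the forward direction you choose $x_i$ in shrinking neighborhoods of $x$ with $T^{n_i}x_i$ in shrinking neighborhoods of $x'$, and in the converse you use the convergent sequences to get $T^{n_i}U\cap V\neq\emptyset$ for large $i$. Your balls of radius $1/i$ are slightly cleaner than the paper's ``local base'', which has to be nested for $x_k\to x$ to follow; your reconciliation with the dense-orbit definition is also correct, though the isolated-point case split is unnecessary, since $T^a x_0\in U$ and $T^b x_0\in V$ already give $T^{b-a}U\cap V\neq\emptyset$ with $b-a\in\Z$.
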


\begin{proof}
Assume first that $(X,T)$ is topologically transitive. Let $x$ and $y$ be arbitrary points in $X$.
Choose a sequence of open neighborhoods $\{U_k\}_{k=1}^\infty$ forming a local base at $x$ (so $\bigcap U_k = \{x\}$) and a sequence of open neighborhoods $\{V_k\}_{k=1}^\infty$ forming a local base at $y$.
By the topological transitivity, there exists an integer $n_k$ such that $T^{n_k} U_k \cap V_k \neq \emptyset$.
Choose a point $x_k \in U_k$ such that $T^{n_k} x_k \in V_k$.
The sequences $x_k \to x$ and $T^{n_k}x_k \to y$ show that $y \in \Prol(x)$. 
Thus,  $\Prol(x) = X$.

Conversely, assume that $\Prol(x) = X$ for every $x \in X$.
Let $U, V \subset X$ be two nonempty open sets. Let $x \in U$ and $x' \in V$.
By assumption there are sequences 
$x_i \to x$ and   $n_i \in \Z$ such that $ T^{n_i}x_i \to x'$.
Hence for large enough $i$,
$ T^{n_i}U \cap V \neq \emptyset$.
\end{proof}
%

\begin{proof}[Proof of Theorem \ref{MainTheorem}]
The implications $(i^*) \imp (i)$ are obvious, as are $(3^*) \imp (2^*)$, $(2^*) \imp (2^{**})$, and $(2^{**}) \imp (2)$.
\vspace{.3cm}

We now establish the equivalences and remaining implications step-by-step.

\vspace{2mm}
\noindent \textbf{The Lower Cycle: $(0) \iff (2) \iff (1)$}

\noindent $(2) \iff (1)$: We have 
$$ 
\overline{\cup\{ (T \times S)^n \Del : n \in \Z\}}= \overline{\cup\{ (\Id \times R)^n \Del : n \in \Z\}},
$$
hence (2) is the same as $ \overline{\cup\{ (\Id \times R)^n \Del : n \in \Z\}}= X \times X$. However, clearly 
\[
\overline{\cup\{ (\Id \times R)^n \Del : n \in \Z\}} =\cup\{\{x\}\times \Prol_R(x) : x \in X\}. 
\]

The right-hand side is
$$
\{(x, y) : y \in \Prol_R(x)\}.
$$
Thus,  (2) is equivalent to $\Prol_R(x) =X$
every $x$, and by Proposition 3.2 this is equivalent to topological transitivity of $(X,R)$.
%
%

(0) $\imp$ (2):\ 
Since $\Del$ is invariant under both $T \times T$ and $S \times S$ we have
$\Sigma \Del = \cup\{ (T \times S)^n \Del : n \in \Z\}$, whence
$X \times X = \overline{\Sigma \Del} = 
\overline{\cup\{ (T \times S)^n \Del : n \in \Z\}}$.

(2) $\imp$ (0):\ 
Let $E=E(X\times X, \Sig)$ be the enveloping semigroup of the
system $(X\times X, \Sig)$.
We have $E(X\times X, \Theta)
\subset E$, where $E(X\times X, \Theta)$ denotes the enveloping 
semigroup of the system $(X \times X, \Theta)$.
Let $u \in E(X\times X, \Theta)$ be a minimal idempotent
of the semigroup $E(X\times X, \Theta)$ and let $I \subset Eu$ 
be a minimal left $E$-ideal.
Finally let
$v$ be a minimal idempotent in $I$. We will show that $u \in I$.
Note that $vu =v$.

Next let $\pi_i : X \times X \to X, \ i=1,2$
denote the projections onto the first and second coordinates,
respectively.
These are homomorphisms of $\Sig$ dynamical systems
where on both coordinates $\Sig$ acts as $\Ga$ in an obvious way.
The projection maps $\pi_i$ canonically induce semigroup homomorphisms
$\pi_1^* : E(X \times X,\Sig) \to E(X,\Gamma)$ and
$\pi_2^* : E(X \times X,\Sig) \to E(X,\Ga)$.
It follows that $u_i = \pi_i^* u$ and $v_i = \pi_i^* v,\ i=1,2$
are minimal idempotents in their respective enveloping semigroups.
We also set $I_i =  \pi_i^*(I)$ and note that the $I_i$'s are minimal left ideals
in their corresponding enveloping semigroups.

The equations $v_i u_i = v_i,\ i=1,2$, hold in $I_i$
and it follows that $v_i$ is in the same $E(X,\Ga)$ minimal ideal as $u_i$.
From Ellis' theory it then follows that $u_i v_i = u_i, \ i=1,2$.
Now observe that
these latter equations imply the equality $uv=u$ as claimed.
Also notice that, in turn, this implies that $u \in I$ and that in particular
$u$ is also a minimal idempotent of $E = E(X \times X, \Sig)$.

To conclude the proof recall that, as the system $(X,\Ga)$ is minimal,
picking an arbitrary $x\in X$, there is a minimal idempotent $u
\in E(X,\Ga)$ with $u x =x$.
Since $E(X,\Ga)$ can be identified
with $E(X \times X, \Theta)$ we also have that $u$ is a minimal idempotent
in $E(X\times X,\Theta)$ and $u(x,x) = (x,x)$.
However the fact that $u$ is also a minimal idempotent in $E
= E(X \times X, \Sig)$ ensures that $(x,x)$ is a minimal point for
the $\Sig$ action.

Since $(X, \Gamma)$ is minimal, the diagonal $\Delta$ is contained in 
$\overline{\Sigma(x,x)}$.  Indeed, the diagonal $\Gamma$-orbit of $(x,x)$ is dense in $\Delta$.
As $\overline{\Sigma(x,x)}$ is closed and $\Sigma$-invariant, it contains
$$
\overline{\Sigma\Delta} = \overline{\bigcup \{(T \times S)^n \Delta : n \in \Z\}}.
$$

By condition  (2), this last set is $X \times X$. Hence
$\overline{\Sigma \Delta} = X \times X$.
Since $(x,x)$ is also minimal, $X \times X$ itself is a minimal $\Sigma$-system.
So, finally $\overline{\Sig(x,x)} = X \times X$ is $\Sig$ minimal as claimed.

\vspace{2mm}
\noindent \textbf{The Berend--Bergelson Analogue: $(2^{**}) \iff (3) + (1)$}

\textbf{Direction $(2^{**}) \imp (3) + (1)$:} \\
Assume $(2^{**})$. The existence of a point $x \in X$ for which the orbit $\{ (T \times S)^n (x,x) : n \in \Z\}$ is dense in $X \times X$ trivially implies that the system $(X \times X, T \times S)$ possesses a dense orbit. Thus, $(X \times X, T \times S)$ is topologically transitive, yielding (3).
Furthermore, since $(2^{**}) \imp (2)$ and the lower cycle establishes (2) $\iff$ (1), condition (1) follows.

\textbf{Direction $(3) + (1) \imp (2^{**})$:} \\
Assume both (3) and (1) hold. 
We proceed by utilizing the Baire Category Theorem. 
For any pair of nonempty open sets $U, V \subset X$, define the set of return times:
$$N(U, V) = \{ x \in X : \exists n \in \Z \text{ such that } T^n x \in U \text{ and } S^n x \in V \}$$
Clearly, $N(U, V) = \bigcup_{n \in \Z} (T^{-n}U \cap S^{-n}V)$, which is an open subset of $X$. We must show it is dense.

Let $W \subset X$ be an arbitrary nonempty open set. We seek an integer $n \in \Z$ and a point $x \in W$ such that $T^n x \in U$ and $S^n x \in V$.
Using the transformation $R = T^{-1}S$, we have $S^n = (TR)^n = T^n R^n$ (since $T$ and $S$ commute). Thus, the requirement is equivalent to finding $x \in W$ such that $T^n x \in U$ and $T^n R^n x \in V$.

Let $\Delta_W = \{(x,x) : x \in W\}$ be the diagonal segment over $W$ in $X \times X$. By condition (1), the system $(X, R)$ is topologically transitive.

By condition (3), the system $(X \times X, T \times S)$ is topologically transitive, implying that the open, $(T \times S)$-invariant set $O = \bigcup_{n \in \Z} (T \times S)^{-n}(U \times V)$ is dense in $X \times X$. We must show that $\Delta_W \cap O \neq \emptyset$. 

Assume for contradiction that $\Delta_W \cap O = \emptyset$. Because $O$ is open and dense, its complement $O^c$ is closed and nowhere dense. By our assumption and the  $(T \times S)$-invariance of $O^c$, we have $\bigcup_{m \in \Z} (T \times S)^m \Delta_W \subset O^c$. 

Since the system $(X, T)$ is minimal and $W$ is open, the collection $\{T^k W\}_{k \in \Z}$ is an open cover of the compact space $X$. Thus, there exists a finite subcover, meaning $X = \bigcup_{i=1}^p T^{k_i} W$ for some finite set of integers $\{k_1, \dots, k_p\}$. This implies $\Delta_X = \bigcup_{i=1}^p (T \times T)^{k_i} \Delta_W$. 

Furthermore, because $T \times T$ commutes with $T \times S$, we have:
$$ \bigcup_{m \in \Z} (T \times S)^m \Delta_X = \bigcup_{i=1}^p (T \times T)^{k_i} \left( \bigcup_{m \in \Z} (T \times S)^m \Delta_W \right) \subset \bigcup_{i=1}^p (T \times T)^{k_i} O^c. $$

By the equivalence $(1) \iff (2)$ established in Theorem \ref{MainTheorem}, the set on the left side is dense in $X \times X$. However, the set on the right is a finite union of closed, nowhere dense sets, and is therefore itself closed and nowhere dense. The closure of the left side is $X \times X$, which implies $X \times X$ is contained in a nowhere dense set---a clear contradiction. Thus, $\Delta_W \cap O \neq \emptyset$, which guarantees there exists some $n \in \Z$ such that $(T \times S)^n \Delta_W \cap (U \times V) \neq \emptyset$.

This implies there is an $x \in W$ satisfying $(T^n x, S^n x) \in U \times V$, meaning $N(U, V) \cap W \neq \emptyset$.
Therefore, $N(U, V)$ is a dense open set in $X$.

Choosing countable topological bases $\{U_k\}$ and $\{V_m\}$ for $X$, the Baire Category Theorem implies that the countable intersection
$$X_0 = \bigcap_{k, m} N(U_k, V_m)$$
is a dense $G_\delta$ subset of $X$. For every $x \in X_0$, the diagonal orbit $\{ (T \times S)^n (x,x) : n \in \Z\}$ intersects every basic open set in $X \times X$, and is therefore dense. This establishes $(2^{**})$.
\end{proof}

\section{The Distal Case}

Let $T$ and $S$ be commuting minimal homeomorphisms of a compact metric
space $X$, and put $\Gamma=\langle T,S\rangle$.  In this section we prove
that if both $(X,T)$ and $(X,S)$ are distal, then the joint $\Gamma$-action
is distal.  Consequently, in the distal case topological transitivity can
be upgraded to minimality in conditions $(1)$ and $(3)$.

We use the canonical Furstenberg tower of one of the two transformations.
The point of using the canonical tower is that every automorphism of the
system preserves all its stages.  
We do not need, and do not assert, that
the canonical towers associated with $T$ and $S$ coincide.

\begin{prop}[Maximal isometric intermediate factor]
\label{prop:maximal-isometric}
Let
\[
q:(X,T)\longrightarrow(Y,T)
\]
be an extension of minimal compact metric systems. Then there exists an
intermediate factorization
\[
X\xrightarrow{p_{\mathrm{iso}}}Z_{\mathrm{iso}}
 \xrightarrow{\pi_{\mathrm{iso}}}Y,
\qquad
q=\pi_{\mathrm{iso}}p_{\mathrm{iso}},
\]
with the following properties:
\begin{enumerate}
\item $\pi_{\mathrm{iso}}:(Z_{\mathrm{iso}},T)\to(Y,T)$ is isometric;
\item if
\[
X\xrightarrow{p}Z\xrightarrow{\pi}Y
\]
is any intermediate factorization for which $\pi$ is isometric, then
there is a unique factor map
\[
\theta:Z_{\mathrm{iso}}\longrightarrow Z
\]
such that
\[
p=\theta p_{\mathrm{iso}}
\qquad\text{and}\qquad
\pi_{\mathrm{iso}}=\pi\theta.
\]
\end{enumerate}
Consequently, $Z_{\mathrm{iso}}$ is unique, as an intermediate factor
of $X$ over $Y$. It is called the maximal isometric intermediate
factor, or the maximal relatively equicontinuous factor of $q$.
\end{prop}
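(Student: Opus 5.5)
The plan is to build $Z_{\mathrm{iso}}$ as the quotient of $X$ by a closed invariant equivalence relation, namely the smallest one over $q$ whose quotient is isometric over $Y$. We work with the relative regionally proximal relation
\[
Q(q)=\bigcap_{\varepsilon>0}\overline{\{(x,x')\in R_q : \exists\, x_1,x_1'\ \text{with}\ (x_1,x_1')\in R_q,\ d(x,x_1)<\varepsilon,\ d(x',x_1')<\varepsilon,\ \exists n,\ d(T^nx_1,T^nx_1')<\varepsilon\}},
\]
where $R_q=\{(x,x'):q(x)=q(x')\}$. Let $S_{\mathrm{iso}}$ be the smallest closed $T$-invariant equivalence relation on $X$ that contains $Q(q)$; it is contained in $R_q$ because $R_q$ is itself such a relation. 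Define $Z_{\mathrm{iso}}=X/S_{\mathrm{iso}}$ and let $p_{\mathrm{iso}}$ be the quotient map. Since $S_{\mathrm{iso}}\subset R_q$, the map $q$ factors as $\pi_{\mathrm{iso}}\circ p_{\mathrm{iso}}$. The quotient is compact metric because the relation is closed.

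\emph{Maximality and uniqueness.} Suppose $X\xrightarrow{p}Z\xrightarrow{\pi}Y$ is a factorization with $\pi$ isometric. Let $\rho$ be the continuous function on $R_\pi$ that is invariant under $T\times T$ and restricts to a metric on each fibre. If $(x,x')\in Q(q)$, pass to the limit along the defining sequences. The $T^n$-invariance of $\rho$ gives $\rho(p x_1,p x_1')=\rho(pT^nx_1,pT^nx_1')$. This quantity is small, using uniform continuity of $\rho$ near the diagonal of $R_\pi$; here one uses that $\rho$ and the metric $d$ induce the same uniformity on $R_\pi$. Hence $\rho(px,px')=0$, so $px=px'$. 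It follows that $Q(q)\subset R_p$. As $R_p$ is a closed invariant equivalence relation, $S_{\mathrm{iso}}\subset R_p$, and $p$ factors as $p=\theta\circ p_{\mathrm{iso}}$. The relation $\pi_{\mathrm{iso}}=\pi\theta$ follows by composing with the surjection $p_{\mathrm{iso}}$, and that surjectivity also gives uniqueness of $\theta$. Uniqueness of $Z_{\mathrm{iso}}$ up to isomorphism over $X$ and $Y$ is then the usual universal-property argument: the two maps $\theta$ compose to the identity by uniqueness.

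\emph{Main obstacle: showing that $\pi_{\mathrm{iso}}$ is isometric.} We must show that an extension of minimal metric systems whose relative regionally proximal relation is trivial is equicontinuous over $Y$. There is an extra subtlety: $Q(\pi_{\mathrm{iso}})$ is the image of $Q(q)$ under $p_{\mathrm{iso}}\times p_{\mathrm{iso}}$, and the latter lies in the diagonal only after passing to the generated equivalence relation. For minimal systems the standard way out is Ellis' result that in the relative setting one may replace $Q$ by a relation that is automatically an equivalence relation. I would avoid this and instead argue directly.

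Let $\Delta$ denote the diagonal of $Z_{\mathrm{iso}}$. First show, by the compactness argument of the absolute case, that if $\pi_{\mathrm{iso}}$ fails to be equicontinuous then there are pairs $(z_k,z_k')\in R_{\pi_{\mathrm{iso}}}$ converging to $\Delta$ and times $n_k$ with $d(T^{n_k}z_k,T^{n_k}z_k')\ge\varepsilon_0$. Take limits to produce a pair in $Q(\pi_{\mathrm{iso}})$ off the diagonal, after applying $T^{-n_k}$ in reverse roles. Then lift it through the open-dense set of points where $p_{\mathrm{iso}}$ is open, using the Veech/Glasner lifting of regional proximality for minimal systems. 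This yields a nontrivial element of $Q(q)$ that is not contained in $S_{\mathrm{iso}}$, which is a contradiction. If the lifting step proves too delicate, the fallback is to invoke the classical statement from Auslander's book (via Ellis' $\tau$-topology) that the quotient by the closed invariant equivalence relation generated by $Q$ is the maximal equicontinuous extension.
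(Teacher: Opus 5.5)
\textbf{There is a genuine gap at the step you flag yourself.} Your construction of $Z_{\mathrm{iso}}=X/S_{\mathrm{iso}}$ is fine, and so is the maximality step: every isometric intermediate factor $p$ satisfies $Q(q)\subset R_p$, hence $S_{\mathrm{iso}}\subset R_p$. The gap is the proof that $\pi_{\mathrm{iso}}$ is isometric, and that is the whole content of the proposition.

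Your claim that $Q(\pi_{\mathrm{iso}})$ is the image of $Q(q)$ under $p_{\mathrm{iso}}\times p_{\mathrm{iso}}$ is precisely what would need proof. Only the inclusion $(p_{\mathrm{iso}}\times p_{\mathrm{iso}})Q(q)\subset Q(\pi_{\mathrm{iso}})$ is automatic. Since $Q(q)\subset S_{\mathrm{iso}}=R_{p_{\mathrm{iso}}}$, that image already lies in the diagonal, so no passage to a generated equivalence relation is involved.

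The lifting sketch does not supply the reverse inclusion. To lift $(z,z')\in Q(\pi_{\mathrm{iso}})$ with witnesses $z_k,z_k',n_k$, you must choose preimages of $T^{n_k}z_k$ and $T^{n_k}z_k'$ that are close in $X$. That requires continuity of $w\mapsto p_{\mathrm{iso}}^{-1}(w)$ at the common limit $w$ of these two sequences. Nothing in your argument places $w$ in the residual set of openness points of $p_{\mathrm{iso}}$. The Veech-type lifting you invoke concerns the absolute regionally proximal relation of minimal flows, not $Q$ relative to $Y$ through a possibly non-open intermediate map.

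Even $Q(\pi_{\mathrm{iso}})=\Delta$ would only give relative equicontinuity. Producing a continuous $T$-invariant relative metric on $R_{\pi_{\mathrm{iso}}}$ is a further step you do not address. Your fallback quotes that $X/S_\pi\to Y$ is the maximal equicontinuous extension and that equicontinuous equals isometric for metric minimal systems. That amounts to citing a stronger form of the statement itself.

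\textbf{The missing idea is that none of this is needed.} The paper defines $R_{\mathrm{iso}}$ as the intersection of the kernels $R_{p_i}$ of all isometric intermediate factors. This family is nonempty because it contains $R_q$, and maximality is then tautological. Since $X^2\setminus R_{\mathrm{iso}}$ is Lindel\"of, countably many $R_n$ suffice. The quotient $X/R_{\mathrm{iso}}$ is then realized as the closed invariant image $P(X)$ of $x\mapsto(p_n(x))_n$ inside the fibre product $\prod_Y Z_n$. On that fibre product, $\rho=\sum_n 2^{-n}\min\{1,\rho_n\}$ is a continuous $T$-invariant relative metric, so isometry is inherited from the pieces rather than deduced from triviality of a regionally proximal relation. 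Your maximality argument, combined with this construction, yields $S_{\mathrm{iso}}\subset R_{\mathrm{iso}}$. The reverse inclusion is exactly the relative Ellis-type theorem your proposal leaves unproved.
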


\begin{proof}
For every isometric intermediate factorization
\[
X\xrightarrow{p_i}Z_i\xrightarrow{\pi_i}Y,
\]
let
\[
R_i=R_{p_i}
   =\{(x,x')\in X^2:p_i(x)=p_i(x')\}.
\]
Each $R_i$ is a closed $T\times T$-invariant equivalence relation and
satisfies
\[
R_i\subset R_q.
\]
Set
\[
R_{\mathrm{iso}}=\bigcap_i R_i.
\]
This is again a closed $T\times T$-invariant equivalence relation
contained in $R_q$. Define
\[
Z_{\mathrm{iso}}=X/R_{\mathrm{iso}}.
\]
Since $R_{\mathrm{iso}}\subset R_q$, the map $q$ factors uniquely as
\[
X\xrightarrow{p_{\mathrm{iso}}}Z_{\mathrm{iso}}
 \xrightarrow{\pi_{\mathrm{iso}}}Y.
\]

It remains to verify that $\pi_{\mathrm{iso}}$ is isometric. Since
$X$ is metrizable, the open set
\[
X^2\setminus R_{\mathrm{iso}}
   =\bigcup_i(X^2\setminus R_i)
\]
is Lindel\"of. Thus there is a countable family
\[
R_1,R_2,\ldots
\]
such that
\[
R_{\mathrm{iso}}=\bigcap_{n=1}^{\infty}R_n.
\]
Consider the diagonal map
\[
P:X\longrightarrow\prod_{n=1}^{\infty}Z_n,
\qquad
P(x)=(p_n(x))_{n\geq1}.
\]
Its kernel relation is precisely $R_{\mathrm{iso}}$. Hence
$Z_{\mathrm{iso}}$ is naturally homeomorphic to the compact image
$P(X)$.

All the coordinate maps $\pi_n:Z_n\to Y$ have the same value on
$P(X)$, so $P(X)$ is contained in the fibre product
\[
\prod_Y Z_n
 =
 \left\{(z_n):
          \pi_n(z_n)=\pi_m(z_m)
          \text{ for all }n,m\right\}.
\]
Choose bounded continuous $T$-invariant relative metrics $\rho_n$ for
the extensions $\pi_n$. On each fibre of the product extension define
\[
\rho\big((z_n),(z_n')\big)
 =
 \sum_{n=1}^{\infty}
 2^{-n}\rho_n(z_n,z_n').
\]
After replacing each $\rho_n$ by
\(\min\{1,\rho_n\}\), if necessary, this series converges uniformly.
It defines a compatible continuous relative metric on the fibre
product. Moreover,
\[
\rho\big(T(z_n),T(z_n')\big)
 =
 \sum_{n=1}^{\infty}
 2^{-n}\rho_n(Tz_n,Tz_n')
 =
 \rho\big((z_n),(z_n')\big).
\]
Restricting this relative metric to the closed invariant subspace
$P(X)\cong Z_{\mathrm{iso}}$ shows that
\(\pi_{\mathrm{iso}}:Z_{\mathrm{iso}}\to Y\) is isometric.

Finally, for every isometric intermediate factor $p_i:X\to Z_i$, we
have
\[
R_{\mathrm{iso}}\subset R_i.
\]
Therefore $p_i$ factors uniquely through $p_{\mathrm{iso}}$, giving
the required map
\[
\theta_i:Z_{\mathrm{iso}}\longrightarrow Z_i.
\]
This proves maximality. If two intermediate factors have this
universal property, each factors through the other; the resulting
maps are mutually inverse because both commute with the corresponding
factor maps from $X$. Thus the maximal intermediate factor is unique.
\end{proof}

\begin{lem}[Canonicity of the distal tower]\label{lem:canonical-tower}
Let $(X,T)$ be a minimal distal system, and let
\[
 (X_\alpha,T_\alpha)_{\alpha\leq\eta}
\]
be its canonical Furstenberg tower, regarded as a tower of factors of
$(X,T)$.  If $S\in\operatorname{Aut}(X,T)$, then $S$ induces a
homeomorphism $S_\alpha$ on every $X_\alpha$, and
\[
 T_\alpha S_\alpha=S_\alpha T_\alpha .
\]
Every bonding map in the tower is therefore equivariant for the action of
$\langle T,S\rangle$.
\end{lem}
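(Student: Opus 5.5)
We argue by transfinite induction on $\alpha\le\eta$, showing that each kernel relation $R_\alpha=\{(x,x'):p_\alpha(x)=p_\alpha(x')\}$ of the factor map $p_\alpha:X\to X_\alpha$ is invariant under $S\times S$. This suffices. If $(S\times S)R_\alpha\subset R_\alpha$ (and likewise for $S^{-1}$), then $S$ descends to a well-defined bijection $S_\alpha$ of $X_\alpha=X/R_\alpha$. It is continuous because $p_\alpha$ is a quotient map of compact spaces, and so $S_\alpha$ is a homeomorphism. The identity $T_\alpha S_\alpha=S_\alpha T_\alpha$ follows by applying $p_\alpha$ to $TS=ST$. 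Equivariance of the bonding maps $X_\beta\to X_\alpha$ under $\langle T,S\rangle$ is then automatic, since they are induced by the inclusions $R_\beta\subset R_\alpha$.

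Recall that the canonical tower is defined by three rules. First, $X_0$ is the trivial system. Second, $X_{\alpha+1}$ is the maximal isometric intermediate factor of $X\to X_\alpha$ given by Proposition \ref{prop:maximal-isometric}. Third, at a limit ordinal, $X_\lambda$ is the inverse limit, so that $R_\lambda=\bigcap_{\alpha<\lambda}R_\alpha$. For $\alpha=0$ we have $R_0=X\times X$, which is invariant. The limit step is immediate, because an intersection of $S\times S$-invariant relations is invariant.

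The main step is the successor step. Assume $R_\alpha$ is $S\times S$-invariant, so that $S_\alpha$ exists as above. We need the relation $R_{\mathrm{iso}}=R_{\alpha+1}$, obtained in the proof of Proposition \ref{prop:maximal-isometric} as the intersection of the relations $R_i$ over all isometric intermediate factorizations $X\to Z_i\to X_\alpha$, to be $S\times S$-invariant. It is enough to show that $S$ permutes this family of relations.

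Take such a factorization $X\xrightarrow{p_i}Z_i\xrightarrow{\pi_i}X_\alpha$ and consider $(S\times S)R_i$. This is the kernel relation of the map $p_i\circ S^{-1}:X\to Z_i$, which is again $T$-equivariant because $S$ commutes with $T$. Composing with $\pi_i$ gives $\pi_ip_iS^{-1}=p_\alpha S^{-1}=S_\alpha^{-1}p_\alpha$. Hence $X\xrightarrow{p_iS^{-1}}Z_i\xrightarrow{S_\alpha\pi_i}X_\alpha$ is an intermediate factorization of $p_\alpha$.

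We must check that $S_\alpha\pi_i$ is still isometric. If $\rho_i$ is a continuous $T$-invariant relative metric for $\pi_i$, then $\rho_i$ itself serves for $S_\alpha\pi_i$, since the fibres of $S_\alpha\pi_i$ are exactly the fibres of $\pi_i$ and $T$ acts on $Z_i$ unchanged. Thus $(S\times S)R_i$ is again in the family, and the same argument applies to $S^{-1}$. Therefore $(S\times S)R_{\alpha+1}=R_{\alpha+1}$.

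The main obstacle is this successor step. One must take care that the family of isometric intermediate factors is indexed by their kernel relations, not by abstract maps, so that "$S$ permutes the family" makes sense. One must also use the inductive hypothesis that $S_\alpha$ exists in order to keep the base $X_\alpha$ fixed. Once this is set up, the universal property in Proposition \ref{prop:maximal-isometric} gives the invariance directly.
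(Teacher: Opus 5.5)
Your proof is correct, but it takes a different route from the paper at the successor step.

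\emph{The paper's route.} It first checks, by a direct computation with sequences, that the relative regionally proximal relation $Q_T(q_\alpha)$ is $S\times S$-invariant. It then uses minimality of the relative equicontinuous structure relation $E_T(q_\alpha)$ to deduce that $E_T(q_\alpha)$ is $S\times S$-invariant. Finally it appeals to the standard fact that $X/E_T(q_\alpha)\to X_\alpha$ is the maximal isometric intermediate extension.

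\emph{Your route.} You work instead with the description of $R_{\alpha+1}$ from the proof of Proposition~\ref{prop:maximal-isometric}: it is the intersection of all kernel relations of isometric intermediate factorizations. You observe that conjugation by $S$ sends each admissible factorization $X\xrightarrow{p_i}Z_i\xrightarrow{\pi_i}X_\alpha$ to another admissible one, $X\xrightarrow{p_iS^{-1}}Z_i\xrightarrow{S_\alpha\pi_i}X_\alpha$, which carries the same $T$-invariant relative metric. Hence $S$ permutes the family of relations and preserves its intersection.

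\emph{Comparison.} Your argument is a purely formal transport of structure through the universal property. It is self-contained within the paper, since it needs only Proposition~\ref{prop:maximal-isometric}. In particular, it avoids the identification of the maximal isometric factor with $X/E_T(q_\alpha)$, which the paper quotes without proof. The paper's route yields more concrete information: the relations $Q_T(q_\alpha)$ and $E_T(q_\alpha)$ are themselves $S\times S$-invariant. That ties the tower to the usual structure theory, and it would still apply if the successor stages were defined directly through $E_T$ rather than through the universal property.
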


\begin{proof}
We argue by transfinite induction.  The assertion is trivial for the
one-point factor $X_0$.  


Suppose inductively that $S$ acts on $X_\alpha$, and write
\[
q_\alpha:X\longrightarrow X_\alpha,
\qquad
q_\alpha S=S_\alpha q_\alpha.
\]
Let $Q_T(q_\alpha)$ denote the relative regionally proximal relation
of the extension $q_\alpha$.

We claim that
\[
(S\times S)Q_T(q_\alpha)=Q_T(q_\alpha).
\]
Indeed, suppose $(x,x')\in Q_T(q_\alpha)$. Then there are sequences
$x_i\to x$, $x_i'\to x'$, and $n_i\in\mathbb Z$ such that
$q_\alpha(x_i)=q_\alpha(x_i')$
and
$d(T^{n_i}x_i,T^{n_i}x_i')\longrightarrow 0$.
Since $q_\alpha S=S_\alpha q_\alpha$, we have
\[
q_\alpha(Sx_i)
 =S_\alpha q_\alpha(x_i)
 =S_\alpha q_\alpha(x_i')
 =q_\alpha(Sx_i').
\]
Moreover, using $ST=TS$,
\[
T^{n_i}Sx_i=ST^{n_i}x_i,
\qquad
T^{n_i}Sx_i'=ST^{n_i}x_i'.
\]
By uniform continuity of $S$,
\[
d(T^{n_i}Sx_i,T^{n_i}Sx_i')\longrightarrow0.
\]
Therefore
\[
(Sx,Sx')\in Q_T(q_\alpha).
\]
This proves
\[
(S\times S)Q_T(q_\alpha)\subset Q_T(q_\alpha).
\]
Applying the same argument to $S^{-1}$ gives the reverse inclusion.

Let $E_T(q_\alpha)$ be the relative equicontinuous structure relation,
that is, the smallest closed $T\times T$-invariant equivalence relation
containing $Q_T(q_\alpha)$. Since $S\times S$ preserves
$Q_T(q_\alpha)$ and commutes with $T\times T$, the relation
\[
(S\times S)E_T(q_\alpha)
\]
is again a closed $T\times T$-invariant equivalence relation containing
$Q_T(q_\alpha)$. By the minimality of $E_T(q_\alpha)$,
\[
E_T(q_\alpha)
 \subset (S\times S)E_T(q_\alpha).
\]
Applying the same argument to $S^{-1}$ yields the reverse inclusion.
Thus
\[
(S\times S)E_T(q_\alpha)=E_T(q_\alpha).
\]

The quotient
\[
X/E_T(q_\alpha)\longrightarrow X_\alpha
\]
is the canonical maximal $T$-isometric intermediate extension of
$q_\alpha$. Since $E_T(q_\alpha)$ is $S\times S$-invariant, $S$
descends to this quotient.

At a limit ordinal $\lambda$, the factor $X_\lambda$ is the inverse limit
of the preceding factors.  The compatible family
$(S_\alpha)_{\alpha<\lambda}$ therefore induces a homeomorphism
$S_\lambda$ of $X_\lambda$.  Commutation with $T_\alpha$ follows at every
stage from commutation on $X$.
\end{proof}

The following elementary relative observation replaces the stronger, and
in general false, assertion that an arbitrary $T$-isometric extension must
also be $S$-isometric.

\begin{lem}[Isometric in one direction]\label{lem:relative-distal}
Let $T$ and $S$ commute on a compact metric space $W$ and descend to
commuting homeomorphisms on a compact metric space $Y$.  Write
$\Gamma=\langle T,S\rangle$, and suppose that
\[
 \pi:(W,T,S)\longrightarrow(Y,T,S)
\]
is equivariant.  Assume that
\begin{enumerate}
\item $\pi:(W,T)\to(Y,T)$ is an isometric extension;
\item $(W,S)$ is distal; and
\item $(Y,\Gamma)$ is distal.
\end{enumerate}
Then $(W,\Gamma)$ is distal.
\end{lem}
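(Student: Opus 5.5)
The plan is to argue directly from the definition of proximality, using the $T$-invariant relative metric of the isometric extension $\pi$ to transfer a $\Gamma$-proximality statement into an $S$-proximality statement. Suppose $w,w'\in W$ are $\Gamma$-proximal. Then there is a sequence $\gamma_i=T^{a_i}S^{b_i}\in\Gamma$ with $d(\gamma_i w,\gamma_i w')\to 0$. The goal is to show $w=w'$.

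\emph{Step 1: the pair lies in a fibre.} Since $\pi$ is $\Gamma$-equivariant and uniformly continuous, $(\pi w,\pi w')$ is $\Gamma$-proximal in $Y$. Hypothesis (3) therefore gives $\pi(w)=\pi(w')$, so $(w,w')\in R_\pi=\{(u,u'):\pi u=\pi u'\}$. Equivariance also shows that $R_\pi$ is a closed, $T\times T$- and $S\times S$-invariant subset of $W\times W$. Consequently all the pairs $(\gamma w,\gamma w')$, and in particular $(S^{b}w,S^{b}w')$, lie in $R_\pi$.

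\emph{Step 2: use the relative metric.} By hypothesis (1), there is a continuous function $\rho:R_\pi\to[0,\infty)$ which restricts to a compatible metric on each fibre and satisfies $\rho(Tu,Tu')=\rho(u,u')$. The key comparison is that on the compact set $R_\pi$ the functions $\rho$ and $d$ vanish exactly on the diagonal $\Delta_W$. A compactness argument then shows they are uniformly comparable near the diagonal:
\begin{itemize}
\item if $(u_i,u_i')\in R_\pi$ and $d(u_i,u_i')\to0$, then every limit point lies in $\Delta_W$, so $\rho(u_i,u_i')\to0$ by continuity;
\item if $\rho(u_i,u_i')\to0$, then every limit point $(u,u')$ satisfies $\rho(u,u')=0$, hence $u=u'$, so $d(u_i,u_i')\to0$.
\end{itemize}
I expect this comparison to be the only delicate point. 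It uses exactly that $\rho$ is continuous on all of $R_\pi$, not just fibrewise, and that it separates points within each fibre. Both are part of the definition of an isometric extension, as used in the proof of Proposition \ref{prop:maximal-isometric}.

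\emph{Step 3: transfer to $S$.} From $d(\gamma_i w,\gamma_i w')\to0$ and the first bullet we get $\rho(\gamma_i w,\gamma_i w')\to 0$. By $T$-invariance of $\rho$, and since $S^{b_i}w,S^{b_i}w'$ lie in a common fibre,
\[
\rho(\gamma_i w,\gamma_i w')=\rho(T^{a_i}S^{b_i}w,T^{a_i}S^{b_i}w')=\rho(S^{b_i}w,S^{b_i}w').
\]
Hence $\rho(S^{b_i}w,S^{b_i}w')\to0$, and the second bullet gives $d(S^{b_i}w,S^{b_i}w')\to0$. Thus $(w,w')$ is $S$-proximal, and hypothesis (2) forces $w=w'$. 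Therefore $(W,\Gamma)$ has no nontrivial proximal pairs, i.e.\ it is distal.
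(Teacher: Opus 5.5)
Your proposal is correct and is essentially the paper's proof. Both arguments project to $Y$ to place the pair in a common fibre, use $T$-invariance of the relative metric $\rho$ to replace $T^{a_i}S^{b_i}$ by $S^{b_i}$, and then compare $\rho$ with $d$ near the diagonal via compactness of $R_\pi$ to obtain $S$-proximality; your two-bullet comparison simply spells out the compactness step that the paper compresses into a line.
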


\begin{proof}
Let $\rho$ be a continuous compatible relative metric on
\[
 R_\pi=\{(w,w')\in W\times W:\pi(w)=\pi(w')\}
\]
which is invariant under $T\times T$.  Suppose that $w,w'\in W$ are
$\Gamma$-proximal.  Since $W$ is metrizable, there are sequences
$n_i,m_i\in\mathbb Z$ such that
\[
 d\bigl(T^{n_i}S^{m_i}w,T^{n_i}S^{m_i}w'\bigr)\longrightarrow0.
\]
After applying $\pi$, the points $\pi(w)$ and $\pi(w')$ are
$\Gamma$-proximal in $Y$.  The distality of $(Y,\Gamma)$ therefore gives
$\pi(w)=\pi(w')$.

The pairs $(S^{m_i}w,S^{m_i}w')$ consequently belong to $R_\pi$.  By the
$T$-invariance of the relative metric,
\begin{align*}
 \rho\bigl(S^{m_i}w,S^{m_i}w'\bigr)
 &=\rho\bigl(T^{n_i}S^{m_i}w,T^{n_i}S^{m_i}w'\bigr)
 \longrightarrow0.
\end{align*}
Here the last implication follows from the continuity of $\rho$ and the
compactness of the diagonal in $W\times W$.  Since $\rho$ is a compatible
relative metric and its zero set is the diagonal, compactness of $R_\pi$
now implies
\[
 d\bigl(S^{m_i}w,S^{m_i}w'\bigr)\longrightarrow0.
\]
Thus $w$ and $w'$ are proximal for $S$.  The distality of $(W,S)$ gives
$w=w'$, proving that the $\Gamma$-action on $W$ is distal.
\end{proof}

\begin{prop}\label{prop-distal-joint}
Let $T$ and $S$ be commuting minimal homeomorphisms of a compact metric
space $X$.  If $(X,T)$ and $(X,S)$ are distal, then the joint system
$(X,\Gamma)$, where $\Gamma=\langle T,S\rangle$, is distal.
\end{prop}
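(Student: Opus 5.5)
We argue by transfinite induction along the canonical Furstenberg tower $(X_\alpha,T_\alpha)_{\alpha\le\eta}$ of the minimal distal system $(X,T)$. Since $S$ commutes with $T$, it is an automorphism of $(X,T)$. By Lemma \ref{lem:canonical-tower}, every stage $X_\alpha$ carries a commuting pair $T_\alpha,S_\alpha$, and every bonding map is $\Gamma$-equivariant. We prove that $(X_\alpha,\Gamma)$ is distal for every $\alpha\le\eta$. Since $X_\eta=X$, this gives the proposition.

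The base case is the one-point factor $X_0$, which is trivially distal. For the successor step, suppose $(X_\alpha,\Gamma)$ is distal. The extension $X_{\alpha+1}\to X_\alpha$ is, by construction of the canonical tower, the maximal $T$-isometric intermediate extension (Proposition \ref{prop:maximal-isometric}), so it is $T$-isometric. We apply Lemma \ref{lem:relative-distal} with $W=X_{\alpha+1}$ and $Y=X_\alpha$. Hypothesis (1) holds by the choice of extension and hypothesis (3) is the induction hypothesis. Hypothesis (2) requires $(X_{\alpha+1},S)$ to be distal. This holds because $X_{\alpha+1}$ is an $S$-equivariant factor of $(X,S)$, and a factor of a distal system is distal: if $\phi:X\to X_{\alpha+1}$ is the factor map and $\phi(x),\phi(x')$ are $S$-proximal, then some pair $(x,x'')$ in the closed invariant set $\phi^{-1}\times\phi^{-1}$ of that pair is $S$-proximal, using the standard lifting of proximal pairs through factors of minimal systems or through the Ellis semigroup. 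Alternatively, distality is equivalent to every point being minimal in the product, and that property passes to factors. The lemma then gives that $(X_{\alpha+1},\Gamma)$ is distal.

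For the limit step, $X_\lambda=\varprojlim_{\alpha<\lambda}X_\alpha$ with $\Gamma$-equivariant bonding maps. If $(w,w')$ is $\Gamma$-proximal in $X_\lambda$, then its image in each $X_\alpha$ is $\Gamma$-proximal, hence diagonal by the induction hypothesis. Points of an inverse limit are determined by their coordinates, so $w=w'$. Since the canonical tower terminates at $X_\eta=X$ when $(X,T)$ is distal (Furstenberg's structure theorem), the induction yields that $(X,\Gamma)$ is distal.

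The main obstacle is the successor step. A $T$-isometric extension need not be $S$-isometric, which is exactly why Lemma \ref{lem:relative-distal} is stated asymmetrically. The extra hypotheses it needs are the $\Gamma$-equivariance of the intermediate factors, supplied by Lemma \ref{lem:canonical-tower}, and the $S$-distality of intermediate factors. For the latter I will use the clean fact that $(Z,S)$ is distal iff every point of $Z\times Z$ is $S$-almost periodic, a property preserved under factors. I should also double-check that the canonical tower's successor step is literally $X/E_T(q_\alpha)$, which Lemma \ref{lem:canonical-tower} already identifies.
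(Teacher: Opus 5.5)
Your proof is correct and follows the paper's argument essentially step for step: transfinite induction along the canonical Furstenberg tower of $(X,T)$, with Lemma~\ref{lem:canonical-tower} supplying $\Gamma$-equivariance, Lemma~\ref{lem:relative-distal} handling the successor step, and the inverse-limit argument at limit ordinals. The only addition is your explicit justification that $(X_{\alpha+1},S_{\alpha+1})$ is distal as a factor of $(X,S)$, which the paper just asserts. Your first, lifting-based phrasing of that step is garbled, but the second argument (every point of $Z\times Z$ is $S$-almost periodic, and this passes to factors) is clean and suffices.
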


\begin{proof}
Let
\[
 (X_\alpha,T_\alpha)_{\alpha\leq\eta},\qquad
 X_0=\{*\},\quad X_\eta=X,
\]
be the canonical Furstenberg tower of the minimal distal system $(X,T)$.
Thus, at a successor ordinal, the bonding map
\[
 \pi_\alpha:X_{\alpha+1}\longrightarrow X_\alpha
\]
is isometric for $T$, and at a limit ordinal the corresponding system is
the inverse limit of the preceding systems.  By
Lemma~\ref{lem:canonical-tower}, $S$ acts on every $X_\alpha$ and all the
bonding maps are $\Gamma$-equivariant.

We prove by transfinite induction that $(X_\alpha,\Gamma)$ is distal.  This
is trivial for $X_0$.  Suppose that $(X_\alpha,\Gamma)$ is distal.  The
system $(X_{\alpha+1},S_{\alpha+1})$ is a factor of the minimal distal
system $(X,S)$ and is therefore distal.  Since
$X_{\alpha+1}\to X_\alpha$ is $T$-isometric,
Lemma~\ref{lem:relative-distal} shows that
$(X_{\alpha+1},\Gamma)$ is distal.

If $\lambda$ is a limit ordinal and $(X_\alpha,\Gamma)$ is distal for every
$\alpha<\lambda$, then their inverse limit $X_\lambda$ is distal: a
$\Gamma$-proximal pair in $X_\lambda$ projects to a proximal, and hence
equal, pair in every $X_\alpha$; the inverse-limit coordinates separate
points.  The induction reaches $X_\eta=X$, and hence $(X,\Gamma)$ is
distal.
\end{proof}

\begin{cor}\label{cor:distal-minimality}
If $T$ and $S$ are commuting distal minimal homeomorphisms, then the
equivalences
\[
 (3)\iff(3^*)
 \qquad\text{and}\qquad
 (1)\iff(1^*)
\]
hold unconditionally.
\end{cor}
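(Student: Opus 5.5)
The plan is to use the standard fact that a topologically transitive distal system is minimal. Distal systems are pointwise almost periodic, so their orbit closures are minimal and partition the space; a transitive orbit closure must then be the whole space. It therefore suffices to show that the relevant systems $(X\times X, T\times S)$ and $(X,R)$ are distal. In both cases the implications $(i^*)\imp(i)$ are already known from Theorem~\ref{MainTheorem}, so only the upgrades $(3)\imp(3^*)$ and $(1)\imp(1^*)$ need proof.

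For $(1)\imp(1^*)$ we first apply Proposition~\ref{prop-distal-joint}: since $(X,T)$ and $(X,S)$ are commuting minimal distal systems, $(X,\Gamma)$ is distal. Now $R=T^{-1}S\in\Gamma$. A pair that is $R$-proximal is in particular $\Gamma$-proximal, since $\{R^n\}\subset\Gamma$. Hence $(X,R)$ is distal, and therefore pointwise almost periodic. If $(X,R)$ is topologically transitive, then some point has a dense $R$-orbit. Because $X$ is compact metric with no isolated-point issues for the Baire argument, the transitive points form a dense $G_\delta$, so such a point exists. That point's orbit closure is then both $X$ and minimal, which gives $(1^*)$.

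For $(3)\imp(3^*)$ we argue in the same way on $X\times X$. The group $\Gamma\times\Gamma$ acts on $X\times X$, and a product of distal systems is distal. If $(g_i,h_i)$ collapses a pair $((x,y),(x',y'))$, then each coordinate collapses, so $x=x'$ and $y=y'$. Thus $(X\times X,\Gamma\times\Gamma)$ is distal. Since $T\times S$ is an element of $\Gamma\times\Gamma$, the cascade $(X\times X,T\times S)$ is distal, and topological transitivity plus distality yields minimality, which is $(3^*)$.

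The main obstacle has already been overcome by Proposition~\ref{prop-distal-joint}, namely the distality of the joint $\Gamma$-action. Without it, $R=T^{-1}S$ need not be distal, since distality of $T$ and of $S$ separately does not obviously pass to a composite. The remaining points to check carefully are two elementary facts. First, restricting a distal group action to a subgroup preserves distality; this is immediate from the definition of proximality. Second, in a distal (hence semisimple) system a dense orbit forces minimality. For this we use that $X$ is the disjoint union of minimal sets: the closure of a dense orbit is a minimal set equal to $X$.
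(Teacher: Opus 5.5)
Your proof is correct and follows the paper's argument. Proposition~\ref{prop-distal-joint} gives distality of $R=T^{-1}S$, a coordinatewise proximality argument gives distality of $T\times S$, and transitivity plus distality then yields minimality.

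The only difference is minor. You get distality of $T\times S$ from that of $(X\times X,\Gamma\times\Gamma)$, which needlessly uses Proposition~\ref{prop-distal-joint}. The paper instead notes directly that a $T\times S$-proximal pair has $T$-proximal first coordinates and $S$-proximal second coordinates, so $(3)\iff(3^*)$ needs only the separate distality of $T$ and $S$.
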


\begin{proof}
By Proposition~\ref{prop-distal-joint}, the joint action of
$\Gamma=\langle T,S\rangle$ on $X$ is distal.  Hence every element of
$\Gamma$, and in particular $R=T^{-1}S$, acts distally on $X$.

The homeomorphism $T\times S$ is distal on $X\times X$.  Indeed, if two
points of $X\times X$ were proximal under $T\times S$, their first
coordinates would be proximal under $T$ and their second coordinates
would be proximal under $S$; distality in the two coordinates forces the
two points to be equal.

A topologically transitive distal homeomorphism of a compact space is
minimal: every point in a distal system is almost periodic, so the orbit
closure of a transitive point is a minimal set and must be the whole
space.  Therefore, (3) implies $(3^*)$, and (1) implies $(1^*)$.  The
reverse implications are immediate.
\end{proof}

\begin{rmk}\label{rmk:no-common-tower}
The proof establishes that the canonical $T$-tower is preserved by $S$ and
that it becomes a tower of distal $\Gamma$-extensions.  It does not show
that a $T$-isometric successor extension is $S$-isometric, nor does it show
that the canonical Furstenberg towers of $(X,T)$ and $(X,S)$ coincide.
\end{rmk}

\begin{rmk}
In the special case where $T$ and $S$ are pro-nil-systems, Shao and Xu
\cite{SX-25} show that
\[
 RP^{[d]}(X,T)=RP^{[d]}(X,S)
 \qquad\text{for every }d\in\mathbb N.
\]
\end{rmk}

 \section{Counterexamples and Limitations}
 
 In this section we examine to what extent the above implications cannot be reversed.
\begin{exas}
\begin{enumerate}
\item \textbf{Showing $(1^*) \not\imp (2^*)$ and $(2) \not\imp (3)$:} 
Let $X = S^1$, $T =R_\alpha$, and $S =R_{2\alpha}$ for an irrational $\alpha$.
$T^{-1}S = R_\alpha$ is minimal (satisfying $1^*$). Here, 
$$
\overline{\cup \{(T \times S)^n \Del : n \in \Z\}}= \{(x, x+z) : x, z \in X\} = X \times X,
$$
 satisfying (2).
However, for every $x \in X$, 
 $$
 \overline {\{(T^n x, S^n x) : n \in \Z\}} =\{(u,v): 2u - v = x\} \neq X \times X,
 $$ failing $(2^*)$ and $(3)$.

\item \textbf{Showing $(1^*) + (3) \not\imp (2^*)$ and $(2^{**}) \not\imp (2^*)$:}
For the relevant facts about horocyccle flows see e.g. \cite[Chapter 4]{Gl-03}.

Let $\Lambda$ be an arithmetic uniform lattice in $G=SL(2,\R)$ and let $X = G/\Lambda$.
Choose a hyperbolic $c \in \comm(\Lambda) \setminus \Lambda$, conjugate it to 
$d = \left(\begin{smallmatrix} a & 0\\ 0 & a^{-1} \end{smallmatrix}\right)$ with $a \ne 1$, so 
$dg=gc$ for some $g$.

Since $\Lambda$ is arithmetic, $\comm(\Lambda)$ is dense and contains hyperbolic elements (trace outside $[-2,2]$), making $c$ diagonalizable over $\R$ and conjugate to $d$ via some $g \in G$ (so $dg = gc$).
Set $T = \left( \begin{smallmatrix} 1 & 1\\ 0 & 1 \end{smallmatrix} \right)$ and $S = d^{-1}Td = \left( \begin{smallmatrix} 1 & a^{-2}\\ 0 & 1 \end{smallmatrix} \right)$.
$R = T^{-1}S = \left(\begin{smallmatrix} 1 & a^{-2} -1\\ 0 & 1 \end{smallmatrix} \right)$ acts minimally on $X$ (satisfying $1^*$).
Because $T$ and $S$ are non-trivial horocycle flows, they are strongly mixing.
The product of strongly mixing systems is strongly mixing, ensuring that $(X \times X, T \times S)$ is topologically transitive.
Therefore, condition (3) holds, which by Theorem \ref{MainTheorem} guarantees $(2^{**})$ holds.

However, for $x = g\Lambda$, $(T^n x, S^nx) = (\Id \times d^{-1})(T \times T)^n (g\Lambda, g c\Lambda)$, meaning the orbit closure is trapped in a finite union of Hecke correspondences and is not $X \times X$.
Thus, $(2^*)$ fails.

\item \textbf{Showing $(2^*) \not\imp (3^*)$ for Commuting Systems:} 
By Theorem \ref{thm-HSY22} below, there exists a minimal weakly mixing system $(X,T)$ where, for the commuting pair $T$ and $S = T^2$, the diagonal orbit of \emph{every} $x \in X$ is dense in $X \times X$.
This example satisfies condition $(2^*)$. However,
the authors also prove that no such point is minimal under $T \times T^2$, 
so  that, in particular, the product system $(X \times X, T \times T^2)$ is not minimal.
Thus, $(3^*)$ fails even for commuting transformations.

\item \textbf{Showing $(2^*) \not\imp (3^*)$ for Non-Commuting Systems:} 
If $T$ and $S$ do not commute, joint density does not force disjointness.
Let $T$ be a doubly minimal homeomorphism on the Cantor set \cite{W}.
Using the Baire category theorem, find a homeomorphism $L$ such that for all $x$, $Lx$ is not on the $T$-orbit of $x$.
Defining $S = L^{-1}TL$ ensures the $S \times T$ orbit of $(x,x)$ is dense, satisfying $(2^*)$, but $T$ and $S$ are not disjoint.

\item
Theorem \ref{thm-HSY22} below gives another example showing that
\((2^{**})\not\Rightarrow(3^*)\).

\item
The following example is due to Hui Xu and, independently, to Chunlin Liu. 
I thank them for the permission to present it here.

Recall that a dynamical system  $(Y,f)$ is called {\em proximal orbit dense}, or  POD,
if $(Y,f)$  is totally minimal and whenever $x, y \in Y$ are distinct points, then
for some $n \not = 0$, the points $f^nx$ and $y$ are proximal (see also \cite{KO-12}. 

Let $(Y,f)$ be a POD system. Let $X=Y\times Y$, $T=f \times f^2$, $S=f^3\times f^4$.
Then $R=T^{-1}S=f^2\times f^2$, and $(X,R)$ is not minimal. But since $(Y,f)$ is POD, it follows from the 
properties of such systems as shown in \cite{FKS}, that
$(X\times X, T\times S)=(Y^4,f\times f^2\times f^3\times f^4)$ is minimal.

Thus, in general, $(3^*) \not \imp (1^*)$.
\end{enumerate}
\end{exas}
 
\section{$\Delta$-Transitivity and Polynomials}
The property of $\Delta$-transitivity corresponds to condition $(2^{**})$.

\begin{defn}
\begin{enumerate}
\item
Let $(X,T)$ be a minimal (compact metric) cascade.
We say that a point $x \in X$ is $d$ {\em $\Delta$-transitive}
if $\overline{\{(T^n x, T^{2n} x, \dots, T^{dn}x) : n \in \Z\}}= X \times X \times \cdots \times X$.
The system $(X,T)$ is {\em $\Delta$-transitive}, if the set of points $x\in X$
which are $\Del$-transitive forms a dense $G_\del$ subset of $X$.
\item
A point $x \in X$ is $d$ {\em $\Del$-minimal}, if the orbit closure
$$
\overline{\{(T \times T^2 \times \cdots \times T^d)^n (x,x, \dots, x) : n \in \Z\}},
$$ 
is a minimal subset of the system $(X^d, T \times T^2 \times \cdots \times T^d)$.
\end{enumerate}
\end{defn}

The following theorems indicate when these conditions and their related notions naturally occur.

\begin{thm}[Glasner, 1994]\label{thm-Gl}\cite{Gl-94}
If a topological dynamical system $(X, T)$ is minimal and weakly mixing, then for every integer $d \ge 2$, it is $\Delta$-transitive.
Specifically, there exists a dense $G_\delta$ subset $X_0 \subseteq X$ such that for every $x \in X_0$, the orbit $\{(T^n x, T^{2n}x, \dots, T^{dn}x) : n \in \mathbb{Z}\}$ is dense in $X^d$.
\end{thm}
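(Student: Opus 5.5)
We follow the classical route of \cite{Gl-94} and induct on $d$. The case $d=1$ is immediate, since weak mixing implies transitivity and minimality supplies a dense orbit. Throughout, the product $(X^d, T\times T\times\cdots\times T)$ is used freely: weak mixing of $(X,T)$ makes it topologically transitive for every $d$ (Furstenberg). As in the proof of $(3)+(1)\imp(2^{**})$ in Theorem~\ref{MainTheorem}, we apply the Baire category theorem. For nonempty open $U_1,\dots,U_d\subset X$, the set
\[
N(U_1,\dots,U_d)=\bigcup_{n\in\Z}\bigcap_{j=1}^d T^{-jn}U_j
\]
is open. Hence it suffices to show that $N(U_1,\dots,U_d)$ is dense, i.e.\ that it meets every nonempty open $W$. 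Intersecting over countable bases then yields the dense $G_\del$ set $X_0$.

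\textbf{Key step (density).} Fix $W$. The aim is to find $x\in W$ and $n$ with $T^{jn}x\in U_j$ for all $j$. We use the following strengthened induction hypothesis: for all nonempty open $V_0,V_1,\dots,V_{d-1}$ there is $n$ with
\[
V_0\cap T^{-n}V_1\cap\cdots\cap T^{-(d-1)n}V_{d-1}\neq\emptyset.
\]
This is a topological multiple recurrence statement for products. To run the induction we apply it not in $X$ but in the weakly mixing minimal system $(X\times X, T\times T)$, or more generally in a finite product. The procedure is as follows.
\begin{enumerate}
\item Using minimality, cover $X$ by finitely many translates $T^{k_i}W$, exactly as in the proof of Theorem~\ref{MainTheorem}. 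This reduces the problem to producing, for given open sets, a point $y$ and $n$ with $y\in W'$ and $T^{jn}y\in U'_j$, where the $W'$ and $U'_j$ are suitably shifted sets.
\item Apply the induction hypothesis to the $(d-1)$-tuple of sets $U_j'\times U_{j+1}'$ in $X\times X$, i.e.\ to the diagonal action of $T\times T$ on pairs. Weak mixing gives a time $m$ with $T^{m}U_j\cap U_{j+1}\neq\emptyset$ for all $j$ simultaneously. The sets $U_j\cap T^{-m}U_{j+1}$ are then nonempty open, and an arithmetic progression for them of length $d-1$ in the step $n$ produces one of length $d$.
\end{enumerate}
This is the standard van der Waerden/Furstenberg–Weiss bootstrapping: weak mixing supplies the extra coordinate, minimality supplies the starting set $W$.

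\textbf{Main obstacle.} The delicate point is arranging that the progression starts inside the prescribed $W$, and not just somewhere. Recurrence results give $\bigcap_j T^{-jn}U_j\neq\emptyset$, but density of $N$ requires control of the base point. We handle this by including $W$ as an extra coordinate $V_0=W$ with step $0$, which is exactly why the induction hypothesis above carries the set $V_0$. The inductive step must therefore prove the statement for a $(d+1)$-tuple with steps $0,1,\dots,d$. The reduction uses weak mixing of $T\times T$ to shift $U_j\mapsto T^{-m}U_j$ uniformly, which replaces steps $j$ by $j-1$ on a product system. We expect that checking this reindexing closes the induction, i.e.\ that the base point stays in $W$ while the remaining steps drop to $0,\dots,d-1$, will be the only nontrivial bookkeeping. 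Once density is established, the $G_\del$ conclusion follows verbatim from the Baire argument already used in Theorem~\ref{MainTheorem}.
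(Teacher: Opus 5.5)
\textbf{There is a genuine gap in the inductive step.} The paper does not prove this statement; it quotes it from Glasner's 1994 paper. So your argument has to stand on its own. Your Baire reduction and the case $d=1$ are fine, but the inductive step does not work.

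You apply the $(d-1)$ case to $(X\times X,T\times T)$ and call it a weakly mixing minimal system. It is weakly mixing, but never minimal for nontrivial $X$: the diagonal is a proper closed invariant set. Moreover, for disjoint open $U,V$ no translate $(T\times T)^n(U\times V)$ meets the diagonal. So the finite-cover step is unavailable in $X\times X$, and likewise in $X^k$ for $k\ge 2$. That step is exactly where minimality is used to put the base point in $W$.

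Minimality cannot simply be dropped. There are weakly mixing spacing shifts for which $T\times T^2$ is not transitive. Yet by your own Baire reduction, the statement you induct on forces $T\times T^2$ to be transitive. You would therefore need a class of systems, closed under $X\mapsto X\times X$, on which the statement holds and propagates, and none is given.

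Independently, the bootstrapping in step 2 does not produce a progression. The weak-mixing time $m$ and the step $n$ are unrelated. So $T^{jn}x\in U_j\cap T^{-m}U_{j+1}$ only gives $T^{jn+m}x\in U_{j+1}$, not $T^{(j+1)n}x\in U_{j+1}$. The reindexing you postpone as bookkeeping is the entire inductive step, and as set up it does not close.

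\textbf{The missing ingredient is transitivity of $\tau_d=T\times T^2\times\cdots\times T^d$ on $X^d$.} Transitivity of $T\times\cdots\times T$ does not give it. You also cannot borrow it from Theorem~\ref{thm-modest}, which the paper deduces from the very statement under proof. It does follow from minimality plus weak mixing:
\begin{enumerate}
\item Put $N(A,B)=\{n\in\Z: A\cap T^{-n}B\neq\emptyset\}$. By weak mixing, choose $k$ so that $U=U_1\cap T^{-k}U_2$ and $V=V_1\cap T^{-k}V_2$ are nonempty. Then $N(U,V)\subset N(U_1,V_1)\cap N(U_2,V_2)$.
\item Iterating, the set $\{m:[m,m+L]\subset N(A,B)\}=\bigcap_{i=0}^{L}N(A,T^{-i}B)$ contains some $N(U,V)$, which is syndetic by minimality.
\item Hence $N(A,B)$ is thickly syndetic, and therefore so is $\{n: jn\in N(A,B)\}$.
\item Thickly syndetic sets form a filter, so $\bigcap_{j=1}^{d}\{n: jn\in N(A_j,B_j)\}\neq\emptyset$. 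This is transitivity of $\tau_d$.
\end{enumerate}

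\textbf{How to finish.} Your idea of shifting the base point is the right one, but it yields condition $(2_d)$ rather than the conclusion. Substituting $x=T^ny$ shows that $\bigcup_n\tau_d^n\Delta_d$ is dense in $X^d$ exactly when the $(d-1)$ case holds, with $W=U_1$. Given $(2_d)$ and transitivity of $\tau_d$, use the finite-cover/Baire argument from the proof of $(3)+(1)\imp(2^{**})$ in Theorem~\ref{MainTheorem}. Since $T\times\cdots\times T$ commutes with $\tau_d$, that argument places the diagonal point in $W$ and gives the $d$ case.
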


\begin{thm}[Huang, Shao, and Ye, 2021]  \cite{HSY21}
If $(X, T)$ is minimal and weakly mixing, and $p_1, \dots, p_d$ are essentially distinct generalized polynomials taking integer values, there exists a dense $G_\delta$ subset $X_0 \subseteq X$ such that for every $x \in X_0$, the orbit $\{(T^{p_1(n)}x, \dots, T^{p_d(n)}x) : n \in \mathbb{Z}\}$ is dense in $X^d$.
\end{thm}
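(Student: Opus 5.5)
The plan is to follow the scheme of the proof of $(3)+(1)\imp(2^{**})$ in Theorem \ref{MainTheorem} and of Theorem \ref{thm-Gl}. We reduce to a "topological multiple recurrence" statement for open sets and then apply Baire category.

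For nonempty open sets $W,U_1,\dots,U_d\subset X$, set
$$N(W;U_1,\dots,U_d)=\{n\in\Z : W\cap T^{-p_1(n)}U_1\cap\cdots\cap T^{-p_d(n)}U_d\neq\emptyset\}.$$
It suffices to show that this set is nonempty for every such choice. Indeed, then for fixed $U_1,\dots,U_d$ the open set $\bigcup_n\bigcap_i T^{-p_i(n)}U_i$ meets every nonempty open $W$, so it is dense. Intersecting over a countable base of open sets gives the dense $G_\delta$ set $X_0$, exactly as at the end of the proof of Theorem \ref{MainTheorem}.

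To show $N(W;U_1,\dots,U_d)\neq\emptyset$, I will prove a stronger, inductive statement: for weakly mixing minimal $(X,T)$, this set is not only nonempty but "large", say thickly syndetic along the relevant structured set of $n$. The induction runs over the PET-weight (Bergelson's polynomial exhaustion technique), adapted to generalized polynomials.

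1. The base case is linear families $p_i(n)=a_in$ with distinct $a_i$. This is Theorem \ref{thm-Gl}'s argument. Weak mixing makes $N(U,V)=\{n: U\cap T^{-n}V\neq\emptyset\}$ thick, and one uses weak mixing of all orders of $T\times\cdots\times T$.
2. Minimality enters as in the proof of Theorem \ref{MainTheorem}. We cover $X$ by finitely many translates $T^{k_j}W$, which lets us replace the arbitrary open set $W$ by one of the $U_i$ after a bounded shift.
3. For the inductive step we apply a topological van der Corput step. Given the family $\{p_i\}$, choose $m$ from a large set and consider the differenced family $\{p_i(n+m)-p_1(n),\,p_i(n)-p_1(n)\}$. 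For generalized polynomials this family has strictly smaller weight, except on a set of $m$ of "small Bohr measure". The induction hypothesis, applied to suitable open sets $T^{-j}U_i\cap U_{i'}$, then produces the required $n$.

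The main obstacle is the differencing in step 3 for generalized polynomials. Expressions like $\lfloor \alpha n\rfloor \beta n$ do not difference cleanly: $p(n+m)-p(n)$ is a lower-degree generalized polynomial only up to bounded error terms, and only when fractional parts such as $\{\alpha n\}$ and $\{\alpha m\}$ avoid the boundary. My first attempt would be to restrict $n$ and $m$ to a Nil-Bohr$_0$ or IP-type set on which the fractional-part parts are controlled, so that the error terms take finitely many values. Each such value can then be absorbed by replacing $U_i$ with finitely many translates $T^{j}U_i$, again using the finite cover from minimality.

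One then needs to check that the "largeness" produced by weak mixing is compatible with these Bohr-type restrictions. I would argue this via disjointness: a minimal weakly mixing system is disjoint from all minimal distal systems, in particular nilsystems. Hence the return-time sets of $(X,T)$, which are thick, intersect every Nil-Bohr$_0$ set. Essential distinctness of the $p_i$ guarantees that no $p_i-p_j$ degenerates to a bounded function along the way, so the induction never encounters the diagonal obstruction.
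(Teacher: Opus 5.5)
The paper does not prove this theorem; it only quotes it from Huang--Shao--Ye. Your sketch therefore has to stand on its own, and it has a genuine gap exactly at the step you call the main obstacle. Your Baire-category reduction to $N(W;U_1,\dots,U_d)\neq\emptyset$ is correct. Everything after it, however, rests on an inductive statement you never formulate (``large, say thickly syndetic along the relevant structured set''), and on a differencing step that does not work as described.

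First, $p(n+m)-p(n)$ is not, in general, a lower-weight generalized polynomial up to a bounded error. For $p(n)=n\lfloor\alpha n\rfloor$, with $\{\cdot\}$ the fractional part,
\[
p(n+m)-p(n)=n\lfloor\alpha m\rfloor+m\lfloor\alpha n\rfloor+m\lfloor\alpha m\rfloor+(n+m)\,\epsilon(n,m),\qquad \epsilon(n,m)=\lfloor\{\alpha n\}+\{\alpha m\}\rfloor\in\{0,1\}.
\]
For irrational $\alpha$ and each fixed $m\neq 0$, the unbounded term $(n+m)\epsilon(n,m)$ is present on a set of $n$ of density $\{\alpha m\}>0$. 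So the exceptional set lives in the $(n,m)$-plane, and it cannot be removed by discarding a set of $m$ of small Bohr measure.

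Second, even a bounded error cannot be absorbed by passing to finitely many translates of $U_i$. If $p_i(n)=q_i(n)+e(n)$, the requirement $T^{p_i(n)}x\in U_i$ says $T^{q_i(n)}x\in T^{-e(n)}U_i$, where the translate depends on $n$. Knowing $T^{q_i(n)}x\in\bigcup_j T^{-j}U_i$ does not give this. So you are forced to restrict $n$ itself to a set on which the whole carry pattern is frozen, simultaneously for all shifts $m_1,\dots,m_k$ used in the PET step. The induction hypothesis for the differenced family must then produce return times inside that restricted set.

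That is the missing idea. You need a notion of largeness with three properties: the multiple return sets of all admissible families provably have it; it is compatible with the carry-pattern restrictions, so that such a return set meets every restriction set that arises; and the PET step reproduces it. Nothing in the proposal supplies this.

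The disjointness remark does not help. A thick set meets every syndetic set, in particular every Nil-Bohr$_0$ set, for trivial reasons. Moreover, the remark concerns the single return sets $N(U,V)$. What has to meet the restriction set is the multiple return set of the differenced family, whose thickness you have not established and which would itself have to come out of the same induction.

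Your base case is also too weak. Theorem \ref{thm-Gl} gives only nonemptiness of the linear return sets (density of diagonal orbits), not the largeness your induction would need.

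Finally, the claim that essential distinctness is inherited by the differenced families needs both a precise definition and a proof. Bounded non-constant differences must be excluded: for $p_1(n)=n$ and $p_2(n)=n+\lfloor\{\alpha n\}+\tfrac12\rfloor$, the pairs $(T^{p_1(n)}x,T^{p_2(n)}x)$ stay in the closed nowhere dense set $\Delta\cup(\Id\times T)\Delta$. You must then check that freezing carries never makes some difference bounded on the restricted set. As it stands, the proposal is a programme for a proof rather than a proof.
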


\begin{thm}[Huang, Shao, and Ye, 2022]\label{thm-HSY22} \cite{HSY22}
There is a minimal weakly mixing system $(X,T)$ without any multiply minimal points.
In fact, for all $x \in X$, $(x,x)$ is a transitive point of $(X \times X, T \times T^2)$ but not minimal \cite{HSY22}.
This demonstrates that $\Delta$-transitivity $(2^{**})$ does not guarantee the existence of multiply minimal points.
\end{thm}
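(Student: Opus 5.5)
The statement is quoted from \cite{HSY22}, so the natural plan is to rebuild the Huang--Shao--Ye construction rather than derive it from the results above. Before constructing anything, I would reduce the problem. Suppose $(X,T)$ is minimal and weakly mixing, and that for \emph{every} $x\in X$ the pair $(x,x)$ is a transitive point of $(X\times X,T\times T^2)$; this is condition $(2^*)$ with $S=T^2$. If some $(x,x)$ were a minimal point, its orbit closure would be a minimal set equal to $X\times X$. Then $(X\times X,T\times T^2)$ would be minimal, i.e.\ $(3^*)$ would hold.

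So the theorem reduces to producing a minimal weakly mixing $(X,T)$ with two properties:
\begin{enumerate}
\item[(a)] every diagonal point $(x,x)$ has dense $T\times T^2$-orbit;
\item[(b)] $(X\times X,T\times T^2)$ is not minimal.
\end{enumerate}
Property (a) then gives the transitivity claim, and the reduction above gives non-minimality of every $(x,x)$. I have not verified that this reduction matches the route taken in \cite{HSY22}. It does mean the two requirements can be built in separately.

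I would construct $X$ as a subshift over a finite alphabet, defined as the orbit closure of a single point $\omega$ obtained by a hierarchical (Chacon-type) block construction. At stage $k$ there is a finite family of $k$-blocks. Each $(k+1)$-block is a concatenation of $k$-blocks separated by spacers whose lengths vary. The variable spacers are what I would rely on to obtain weak mixing, by the standard Chacon argument, and uniform recurrence of all blocks gives minimality. For (a) the construction must work uniformly in $x$, not just for generic $x$ as in Theorem~\ref{thm-Gl}. I would do this by requiring that every $(k+1)$-block contain, for each ordered pair $(B,B')$ of $k$-blocks, a position $j$ and a step $n$ with the pattern $B$ at $j+n$ and $B'$ at $j+2n$. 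Since every $x\in X$ contains $(k+1)$-blocks syndetically, every $x$ then sees every pair of $k$-cylinders along some arithmetic progression $(n,2n)$, which is exactly density of the orbit of $(x,x)$.

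For (b) I would exhibit a pair $(y,y')$ whose $T\times T^2$-orbit closure is a proper subset of $X\times X$, and I expect this to be the main obstacle. The difficulty is that (a) forces the diagonal orbits to be dense, so any obstruction must live off the diagonal. My first attempt would be to build an asymptotic pair into the construction: two points $y\neq y'$ of $X$ that agree on $[0,\infty)$, created by making two $k$-blocks differ only at their left end. Along $T\times T^2$ the orbit of $(y,y')$ then relates $y'$ to $y$ through the doubling map on times. I would then choose the spacer rules so that some fixed pair of cylinders $[B]\times[B']$ is never visited, ideally by keeping one marker pattern at times $n$ and $2n$ incompatible for points in this orbit. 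If this rigidity clashes with the pattern-insertion requirement of (a), the fallback is to show directly that some $(y,y')$ has a non-minimal orbit closure, for instance by exhibiting a proximal pair for $T\times T^2$ inside a transitive orbit closure that is not distal. That would already show $X\times X$ is not minimal.
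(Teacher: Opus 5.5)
The paper gives no proof of this statement; it is quoted from \cite{HSY22}. Your reduction is correct: given (a), a minimal diagonal point would make $(X\times X,T\times T^2)$ minimal, so (a) and (b) suffice. However, neither (a) nor (b) is established by what you describe.

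\emph{The argument for (a) has an anchoring gap.} The orbit of $(x,x)$ meets $[w]\times[w']$ iff $w$ occurs in $x$ at some position $n$ and $w'$ at position $2n$. So the progression $0,n,2n$ must start at the origin of $x$. Your block condition gives, inside each $(k+1)$-block, only \emph{some} progression $j,j+n,j+2n$ with $j$ of your choosing. In a point $x$ this says only that $(T^mx,T^mx)$ visits $[B]\times[B']$ for some $m$ (fixed by where the block and $j$ sit), not that $(x,x)$ does. Your condition is a property of the language of $X$, shared by all points. In fact, by Theorem~\ref{thm-Gl} and uniform recurrence, all sufficiently long words of every minimal weakly mixing subshift contain such patterns, so the condition adds nothing beyond the generic statement.

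There is also an offset problem. The sets $[B]$ for $k$-blocks $B$ are not a base. If $w$ sits at offset $s$ in $B$ and $w'$ at offset $s'$ in $B'$, then you need $B$ at $p$ and $B'$ at $2p+2s-s'$, not at $2p$.

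To force (a) for every $x$ you need the following: for \emph{every} anchor position $j$ in every block (spacer positions included) and every pair of words, a step $n$ of suitable sign with $w$ at $j+n$ and $w'$ at $j+2n$, inside a window whose content is fixed at that level. This is a covering condition on the set of differences $2a-b$, with $a,b$ ranging over occurrences of $w,w'$. That is where ``uniformly in $x$'' is actually enforced.

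\emph{Part (b) has no argument, and it is the heart of \cite{HSY22}.} The obstruction must survive the anchored form of (a), and your asymptotic-pair idea runs straight into (a).

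If $y'$ agrees with $y$ on $[0,\infty)$, then $(T^ny,T^{2n}y')$ and $(T^ny,T^{2n}y)$ are asymptotic as $n\to+\infty$. So the two points have the same $\omega$-limit set under $T\times T^2$. Since $X\times X$ has no isolated points, a Baire argument shows that the dense orbit of $(y,y)$ given by (a) is already dense in forward time or in backward time. Unless you also arrange that $(y,y)$ is dense only in backward time, and then control the backward orbit of $(y,y')$ (where asymptoticity says nothing), the orbit of $(y,y')$ is dense and obstructs nothing.

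Your fallback is not a valid criterion, because proximal pairs and non-distality are compatible with minimality. In fact $(X\times X,T\times T^2)$ always has proximal pairs, e.g.\ $((x,z),(x',z))$ with $(x,x')$ a $T$-proximal pair. Such pairs exist because a nontrivial minimal weakly mixing system is never distal.

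What is missing is an explicit point of $X\times X$ with non-dense $T\times T^2$-orbit, or a proper closed invariant subset, built compatibly with the anchored version of (a). The proposal contains neither.
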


\section{Higher-Dimensional Analogues}

The topological equivalences established for two transformations extend naturally to any finite sequence of commuting minimal homeomorphisms.
Let $T_1, \dots, T_d$ be commuting minimal homeomorphisms of $X$.
Let $\tau = T_1 \times T_2 \times \cdots \times T_d$ acting on the product space $X^d$, and let $\Delta_d = \{(x, x, \dots, x) : x \in X\} \subset X^d$ be the diagonal.
We define the $d$-dimensional analogues of our core transitivity conditions:
\begin{itemize}
    \item[$(1_d)$] For every $i \neq j$, the system $(X, T_i^{-1}T_j)$ is topologically transitive.
    \item[$(2_d)$] $\overline{\bigcup_{n \in \Z} \tau^n \Delta_d} = X^d$.
    \item[$(3_d)$] The system $(X^d, \tau)$ is topologically transitive.
    \item[$(2_d^{**})$] There exists a dense $G_\del$ subset 
    $    X_0 \subset X$ such that for every $x \in X_0$, 
    $$
    \overline{\{\tau^n(x, \dots, x) : n \in \Z\}} = X^d
    $$
\end{itemize}

\begin{thm}\label{thm-higher-dim}
For any $d \ge 2$, the topological analogue of the Berend-Bergelson theorem holds:
$$ 
 (2_d^{**}) \iff (1_d) + (3_d).
 $$
As in the two-dimensional case, topological transitivity of the product already forces the pairwise difference systems to be topologically transitive.
\end{thm}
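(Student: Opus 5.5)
The plan is to reduce as much as possible to the two-dimensional Theorem~\ref{MainTheorem}, and to redo the Baire category argument of the direction $(3)+(1)\imp(2^{**})$ in $X^d$.

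\textbf{Step 1: $(3_d)\imp(1_d)$.} This is the final sentence of the statement. For $i\neq j$, the coordinate projection $X^d\to X\times X$, $(x_1,\dots,x_d)\mapsto(x_i,x_j)$, intertwines $\tau$ with $T_i\times T_j$. A factor of a topologically transitive system is topologically transitive, so $(X\times X,T_i\times T_j)$ satisfies condition (3) for the commuting minimal pair $T_i,T_j$. The chain $(3)\imp(0)\iff(2)\iff(1)$ of Theorem~\ref{MainTheorem}, applied to this pair, then gives transitivity of $(X,T_i^{-1}T_j)$.

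\textbf{Step 2: $(2_d^{**})\imp(1_d)+(3_d)$.} A single point $x\in X_0$ has a dense $\tau$-orbit through $(x,\dots,x)$, so $(3_d)$ holds, and $(1_d)$ follows from Step 1.

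\textbf{Step 3: $(1_d)+(3_d)\imp(2_d^{**})$.} I copy the Baire argument. For nonempty open $U_1,\dots,U_d$ put
\[
N(U_1,\dots,U_d)=\bigcup_{n\in\Z}\bigcap_{i=1}^d T_i^{-n}U_i .
\]
This set is open, and I claim it is dense. Fix an open $W\neq\emptyset$ and set $O=\bigcup_n\tau^{-n}(U_1\times\cdots\times U_d)$. By $(3_d)$, $O$ is open and dense. Suppose $\Delta_W\cap O=\emptyset$. Cover $X$ by finitely many sets $T_1^{k}W$, and use that the diagonal map $T_1\times\cdots\times T_1$ commutes with $\tau$. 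Exactly as in the two-dimensional proof, this gives
\[
\bigcup_m\tau^m\Delta_d\subset\bigcup_{i=1}^p (T_1\times\cdots\times T_1)^{k_i}O^c,
\]
which is a closed nowhere dense set. This is a contradiction as soon as $(2_d)$ holds. Intersecting the dense open sets $N(U_1,\dots,U_d)$ over a countable base then yields the dense $G_\delta$ set $X_0$.

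\textbf{Main obstacle: deriving $(2_d)$ from $(1_d)+(3_d)$.} For $d=2$ this was $(1)\iff(2)$. For $d\ge3$, pairwise transitivity of the $T_i^{-1}T_j$ alone does not obviously make $\bigcup_n\tau^n\Delta_d$ dense, so $(3_d)$ must be used as well. Let $\Sigma_d=\langle \tau, T_1\times\cdots\times T_1,\dots,T_d\times\cdots\times T_d\rangle$; as before, $\overline{\Sigma_d\Delta_d}=\overline{\bigcup_n\tau^n\Delta_d}$.

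The first thing I would try is the enveloping semigroup argument of $(2)\imp(0)$. That argument only uses the coordinate projections, and it should still show that each $(x,\dots,x)$ is a $\Sigma_d$-minimal point, so that $M=\overline{\Sigma_d(x,\dots,x)}\supset\Delta_d$ is a $\Sigma_d$-minimal set. Next, the abelian group $\Gamma^d$ acting coordinatewise commutes with $\Sigma_d$, so its translates $gM$ are $\Sigma_d$-minimal sets, and any two of them are equal or disjoint. Their union contains $\Gamma^d\Delta_d$. I would aim to show this union is dense and use Baire (there are only countably many translates) to give some $gM$, hence $M$, nonempty interior. A closed $\tau$-invariant set with interior in the $\tau$-transitive space $X^d$ must be all of $X^d$, which gives $(2_d)$.

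The delicate point is the density of $\Gamma^d\Delta_d$ together with the Baire step. Density of $\Gamma^d\Delta_d$ should come from $(1_d)$, by an inductive Prol-type argument on the coordinates generalizing Proposition~\ref{prop-prol-trans}. The Baire step needs the union of the translates to be (the closure of) a set to which the category argument applies, and this is where I expect most of the work to lie.
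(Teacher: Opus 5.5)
Your Step~3 is fine once $(2_d)$ is available. There are, however, two genuine problems.

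\textbf{Step 1 is false.} The arrow $(3)\imp(0)$ does appear in the diagram of Theorem~\ref{MainTheorem}, but the paper never proves it, and it is wrong. Only $(3^*)\imp(0)$ holds, because $T\times S\in\Sigma$. For a counterexample, take $T_1=\cdots=T_d=T$ with $(X,T)$ minimal and weakly mixing. Then $\tau=T\times\cdots\times T$ is transitive, so $(3_d)$ holds. But $T_i^{-1}T_j=\Id$ is not transitive, and for $d=2$ the group $\Sigma=\langle T\times T\rangle$ preserves $\Delta$, so (0) fails. Hence the last sentence of the statement cannot be proved. In Step~2 you must obtain $(1_d)$ another way: projecting $(2_d^{**})$ onto coordinates $i,j$ gives $(2^{**})$, hence $(2)$, for the pair $T_i,T_j$, and then you can use the proved equivalence $(2)\iff(1)$.

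\textbf{The key step $(1_d)+(3_d)\imp(2_d)$ is missing.} The paper only says the proof is ``similar to $d=2$''. But the $d=2$ proof passes through $(1)\iff(2)$, and that equivalence fails for $d\ge3$. On $X=\R/\Z$, let $T_j=R_{\beta+(j-1)\alpha}$ for $j=1,2,3$, with $1,\alpha,\beta$ rationally independent. Every $T_i^{-1}T_j$ is minimal, yet $\bigcup_n\tau^n\Delta_3\subset\{(u,v,w):u-2v+w=0\}$.

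So you are right that $(3_d)$ must enter, but your sketch does not say how, and the Baire step you propose cannot work. The $\Sigma_d$-minimality of $M$ is fine, since the enveloping-semigroup argument uses only the coordinate projections. The trouble is that a countable union of closed nowhere dense sets can be dense, so density of $\bigcup_{g\in\Gamma^d}gM$ yields no translate with interior. Moreover, density of $\Gamma^d\Delta_d$ follows from minimality of $T_1$ alone, with no appeal to $(1_d)$.

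The example above shows concretely that the step fails. There $M=\{u-2v+w=0\}$, and its $\Gamma^3$-translates are the sets $\{u-2v+w=c\}$ with $c\in\Z\alpha+\Z\beta$. Their union is dense, but each translate is nowhere dense. Everything your plan uses to produce interior ($\Sigma_d$-minimality, disjointness of translates, density, $(1_d)$) holds in this example, yet the conclusion is false. In your plan $(3_d)$ enters only after interior has been obtained, which is too late.

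What is needed is a genuine argument that transitivity of $\tau$ forces $\overline{\Sigma_d\Delta_d}$ to have nonempty interior, or some other route to $(2_d)$. Without it, the implication $(1_d)+(3_d)\imp(2_d^{**})$ remains unproved for $d\ge3$.
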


The proof is similar to that of the case $d=2$ so we skip the details.

%

\vspace{.3cm}

For the specific multiple recurrence case involving linear iterates of a single transformation ($\tau = T \times T^2 \times \cdots \times T^d$), the structural rigidity allows us to immediately deduce full topological transitivity on the product space.

\begin{thm}\label{thm-modest}
Let $(X, T)$ be a minimal weakly mixing system. For any $d \ge 2$, let $\tau = T \times T^2 \times \cdots \times T^d$.
Then the product system $(X^d, \tau)$ is topologically transitive.
\end{thm}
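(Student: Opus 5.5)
Theorem~\ref{thm-Gl} already contains the statement. Since $(X,T)$ is minimal and weakly mixing, it provides a dense $G_\del$ set $X_0 \subseteq X$ such that for every $x \in X_0$ the sequence
$$(T^n x, T^{2n}x, \dots, T^{dn}x) = \tau^n(x,x,\dots,x), \qquad n \in \Z,$$
is dense in $X^d$. This sequence is exactly the $\tau$-orbit of the diagonal point $(x,\dots,x)\in X^d$. So I would fix any $x \in X_0$ (nonempty, by the Baire category theorem) and note that $(x,\dots,x)$ has a dense $\tau$-orbit. On a compact metric space without isolated points, a dense orbit gives topological transitivity in the open-set sense. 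Concretely, for nonempty open $U,V \subseteq X^d$, pick $n$ with $\tau^n(x,\dots,x)\in U$. Then pick $m$ with $\tau^{m}(x,\dots,x) \in V$ and $m \neq n$; this is possible because a dense orbit in a perfect space visits $V$ infinitely often. Then $\tau^{m-n}U \cap V \neq \emptyset$.

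The only point to check is that $X^d$ has no isolated points, so that the orbit really returns infinitely often. This holds because a minimal weakly mixing system on a space with more than one point is infinite, and a minimal system on an infinite compact space is perfect. The trivial one-point case is immediate.

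I would add, as a consistency remark, that the result also follows from the machinery above. By Theorem~\ref{thm-higher-dim} with $T_i = T^i$, the statement $(2_d^{**})$, which is what Theorem~\ref{thm-Gl} asserts, implies $(3_d)$. As a byproduct one gets $(1_d)$: each $T^{j-i}$ is topologically transitive, which is in any case clear since weak mixing passes to nonzero powers. I expect no real obstacle here. The whole weight of the proof sits in the cited Theorem~\ref{thm-Gl}, and all that remains is the observation that a dense diagonal orbit is in particular a dense orbit of $\tau$.
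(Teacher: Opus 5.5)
Your proof is correct and matches the paper's. Both start from the dense $G_\delta$ set of $\Delta$-transitive points supplied by Theorem~\ref{thm-Gl} and then observe that a dense $\tau$-orbit of a diagonal point yields transitivity of $(X^d,\tau)$; the paper phrases this last step as the trivial direction $(2_d^{**})\Rightarrow(3_d)$ of Theorem~\ref{thm-higher-dim}, whereas you verify it directly. Your perfectness check ensuring $m\neq n$ is harmless but unnecessary, since the transitivity condition used in Proposition~\ref{prop-prol-trans} allows any $n\in\Z$.
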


\begin{proof}
By Theorem \ref{thm-Gl}, because $(X, T)$ is minimal and weakly mixing, it is $\Delta$-transitive.
This guarantees the existence of a dense $G_\del$ subset of points $x \in X$ whose diagonal orbit under $\tau$ is dense in $X^d$.
This is exactly condition $(2_d^{**})$. By Theorem \ref{thm-higher-dim}, condition $(2_d^{**})$ implies condition $(3_d)$, establishing the topological transitivity of $(X^d, \tau)$.
(Note that condition $(1_d)$ is also trivially satisfied here: since $T_i = T^i$, $T_i^{-1}T_j = T^{j-i}$. Because $(X,T)$ is weakly mixing, it is totally transitive, ensuring every non-zero power $T^{j-i}$ is topologically transitive).
\end{proof}

%
%

\bibliographystyle{amsplain}

\end{document}